%
%
%
%
\documentclass[11pt]{amsart}

\usepackage{graphicx}
\usepackage{enumitem}
\usepackage{lipsum}
\usepackage{verbatim}
\usepackage{url}
\usepackage{amssymb}
\usepackage{amsthm}
\usepackage{amsmath}
\usepackage[all, cmtip]{xy}
\usepackage{tikz}
\usepackage{dsfont}
\usepackage{spectralsequences}
\usepackage{mathtools}
\usetikzlibrary{matrix}
\usepackage[center]{caption}
\usepackage{stmaryrd}
\usepackage{bbm}
\usepackage{changepage}
\graphicspath{{pics/}}

\linespread{1.3}
\setlength{\topmargin}{-1cm}
\setlength{\textheight}{22cm}
\setlength{\textwidth}{16cm}
\setlength{\headsep}{1cm}
\setlength{\footskip}{0cm}
\calclayout

\newlength\mylen
\settowidth\mylen{\textbf{Case~5.}}
\newlist{mycases}{enumerate}{1}
\setlist[mycases,1]{label=\textbf{Case~\arabic*.}, 
  labelwidth=\dimexpr-\mylen-\labelsep\relax,leftmargin=0pt,align=right}

\newtheorem{question}{Question}
\newtheorem{theorem}{Theorem}

\newtheorem{lemma}{Lemma}[section]
\newtheorem{prop}[lemma]{Proposition}

\theoremstyle{definition}

\theoremstyle{remark}

\numberwithin{equation}{section}


\newcommand{\Z}{\mathbb Z}
\newcommand{\N}{\mathbb N}

\newcommand{\cN}{\mathcal N}

\newcommand{\R}{\mathbb R}

\newcommand{\cI}{\mathcal I}

\newcommand{\calR}{\mathcal R}
\newcommand{\p}[1]{\medskip \noindent \emph{#1}.}
\newcommand{\definition}[1]{\medskip \noindent \emph{#1.}}
\newcommand{\cB}{\mathcal B}
\newcommand{\cC}{\mathcal C}

\newcommand{\calU}{\mathcal U}
\newcommand{\cA}{\mathcal A}
\newcommand{\bE}{\mathbb E}

\newcommand{\calT}{\mathcal{T}}

\newcommand{\Tau}{\mathcal T}

\newcommand{\vccat}{\mathfrak{VerCom}}

\newcommand{\marksur}{\mathfrak{MarkSur}}
\newcommand{\parcob}{\mathfrak{ParCob}}
\newcommand{\bn}{\noindent}

\newcommand{\tsurcat}{\mathfrak{TSurPos}}

\newcommand{\cut}{\ssearrow}

\newcommand{\calM}{\mathcal{M}}

\newcommand{\blk}{\mathfrak{b}}
\newcommand{\biL}{\mathfrak{L}}
\newcommand{\fB}{\mathfrak{B}}
\newcommand{\calV}{\mathcal{V}}
\newcommand{\calN}{\mathcal{N}}
\newcommand{\calX}{\mathcal{X}}
\newcommand{\bN}{\mathbb{N}}

\let\oldunderset\underset
\protected\def\underset{\oldunderset}

\DeclareMathOperator{\Mod}{Mod}

\DeclareMathOperator{\sep}{sep}

\DeclareMathOperator{\nosep}{nosep}

\DeclareMathOperator{\spl}{split}

\DeclareMathOperator{\Cech}{\check{C}ech}

\DeclareMathOperator{\dr}{dr}

\DeclareMathOperator{\face}{face}
\DeclareMathOperator{\adj}{adj}
\DeclareMathOperator{\tot}{tot}

\DeclareMathOperator{\lk}{lk}

\DeclareMathOperator{\Hull}{Hull}
\DeclareMathOperator{\ConvJoin}{CJoin}
\DeclareMathOperator{\HH}{\ensuremath{H}}
\newcommand{\tHH}{\widetilde{\HH}}

\DeclareMathOperator{\sus}{sus}


\begin{document}

\title{The connectivity of the complex of homologous curves}

\author{Daniel Minahan}
\address{Eckhart Hall, Chicago, IL, 60615}
\email{dminahan@uchicago.edu}

\subjclass[2000]{Primary 54C40, 14E20; Secondary 46E25, 20C20}

\date{\today}

\keywords{Curve complex}

\begin{abstract}
We show that the complex of homologous curves of a closed, orientable surface of genus $g$ is $(g-3)$--acyclic.  This is a key step in Putman and the author's proof that the second rational of the Torelli group is finite dimensional for surfaces of sufficiently large genus.
\end{abstract}

\vspace*{-1.5cm}

\maketitle
\thanks

\section{Introduction}\label{introsection}

Let $S_g$ be a connected, orientable, closed surface of genus $g$.  For the remainder of this paper, a \emph{curve} on $S_g$ will be a homotopy class of oriented essential embedded circles $S^1 \rightarrow S_g$.  A \emph{multicurve} $M$ will be a set of distinct curves in $S_g$ with pairwise disjoint representatives.  A curve $c \subseteq S_g$ is \emph{nonseparating} if $c$ has a nonseparating representative.  Likewise, a multicurve $M \subseteq S_g$ is \emph{nonseparating} if every curve in $M$ can be simultaneously represented by disjoint embedded circles whose union is nonseparating in $S_g$.  We let $\cC(S_g)$ denote the \emph{curve complex} of $S_g$~\cite[pg 92]{FarbMarg}.  The $k$--cells of this complex are multicurves $M \subseteq S_g$ containing $k+1$ curves.

\p{Complex of homologous curves} If $c \subseteq S_g$ is a curve, we let $[c] \in \HH_1(S_g;\Z)$ denote the homology class represented by $c$.  Let $\vec{x} \in \HH_1(S_g;\Z)$ be a \emph{primitive homology class}, i.e., a class for which if $\vec{x} = m\vec{y}$ for some $\vec{y} \in \HH_1(S_g;\Z)$ and $m \in \Z$, then $m = \pm 1$.  The \emph{complex of homologous curves}, defined by Putman~\cite{Putmantrick} and denoted $\cC_{\vec{x}}(S_g)$, is the full subcomplex of $\cC(S_g)$ generated by curves $c$ that satisfy $[c] = \vec{x}$. Putman used the work of Johnson~\cite{JohnsonII} to show that when $g \geq 3$, the complex $C_{\vec{x}}(S_g)$ is connected~\cite{Putmantrick}.  Recall that if $X$ is a path--connected, locally path--connected, semi--locally simply connected topological space, then $X$ is \emph{$n$--acyclic} if $\HH_k(X;\Z)$ vanishes for all $k \leq n$.  By convention, we say that $X$ is \emph{$(-1)$--acyclic} if $X$ is non--empty, and we say that the empty topological space is $(-2)$--acyclic.  Our first main result is the following.

\begin{theorem}\label{homolcyclethm}
Let $g \geq 2$.  Let $\vec{x} \in \HH_1(S_g;\Z)$ be a primitive homology class.  The complex $\cC_{\vec{x}}(S_g)$ is $(g-3)$--acyclic. 
\end{theorem}

\bn If $\vec{x} = 0$, then the resulting complex is called the \emph{complex of separating curves}, and is denoted $\cC_{\sep}(S_g)$.  Looijenga has proven that $\cC_{\sep}(S_g)$ is $\left(g-3\right)$--connected~\cite[Theorem 1.1]{LooijengaSeparating}.

\p{Application of Theorem~\ref{homolcyclethm} to the Torelli group}  The \emph{Torelli group}, denoted $\cI_g$, is the subgroup of the mapping class group $\Mod(S_g)$ consisting of mapping classes $\varphi$ that act trivially on $\HH_1(S_g;\Z)$.  The Torelli group $\cI_g$ acts on $\cC_{\vec{x}}(S_g)$ for any choice of primitive $\vec{x} \in \HH_1(S_g;\Z)$.  The author and Putman use Theorem \ref{homolcyclethm} in a key way to compute the second rational  homology of the Torelli group in a stable range \cite{MinahanPutmanH2}.  The complex $\cC_{\vec{x}}(S_g)$ has been used by Hatcher and Margalit to give a new proof that $\cI_g$ is generated by bounding pair maps~\cite{HatcherMargalithomologous}.  Gaster, Greene and Vlamis also related the chromatic number of $\cC_{\vec{x}}(S_g)$ to the Chillingworth homomorphism~\cite{GGV}.

\subsection{The strategy of the proof of Theorem~\ref{homolcyclethm}}

Let $g \geq 2$ and let $\vec{x} \in \HH_1(S_g;\Z)$ be a primitive homology class.  Bestvina, Bux and Margalit defined a complex called the complex of minimizing cycles, denoted $\cB_{\vec{x}}(S_g)$~\cite{BBM}.  The complex of minimizing cycles has two important properties:
\begin{itemize}
\item $\cC_{\vec{x}}(S_g)$ is a subcomplex of $\cB_{\vec{x}}(S_g)$, and
\item $\cB_{\vec{x}}(S_g)$ is contractible~\cite[Theorem E]{BBM}.
\end{itemize}
\bn  Hatcher and Margalit~\cite{HatcherMargalithomologous} prove that the relative homotopy group $\pi_1\left(\cB_{\vec{x}}(S_g), \cC_{\vec{x}}(S_g)\right)$ vanishes when $g \geq 3$.  Along with the contractibility of $\cB_{\vec{x}}(S_g)$, this implies that $\cC_{\vec{x}}(S_g)$ is connected when $g \geq 3$.  We will use their PL--Morse function to prove $\HH_k(\cB_{\vec{x}}(S_g), \cC_{\vec{x}}(S_g);\Z) = 0$ for $k \leq g - 2$.  The contractibility of $\cB_{\vec{x}}(S_g)$ then implies Theorem~\ref{homolcyclethm}.

\subsection{The complex of splitting curves} As part of the proof that $\HH_k\left(\cB_{\vec{x}}(S_g),\cC_{\vec{x}}(S_g);\Z\right) = 0$ for $k \leq g - 2$, we will also prove a result about the \emph{complex of splitting curves}.  A \emph{partitioned surface}~\cite{Putmantorelli} is a pair $\Sigma = (S,P)$ where $S$ is a connected, compact, orientable surface and $P$ is a partition of the boundary components of $S$. A \emph{block} of $P$ is an element of $P$.  The \emph{complex of separating curves} $\cC_{\sep}(\Sigma)$ is the full subcomplex of the curve complex $\cC(S)$ generated by curves $\delta$ such that each block $b \in P$ is contained entirely in one connected component of $S \cut \delta$.  Here, $\cut$ denotes Farb and Margalit's notion of cutting curves on surfaces~\cite{FarbMarg}.  The set of partitioned surfaces forms a category, where morphisms are certain inclusions that interact nicely with the partitions of the boundary components~\cite{Putmantorelli}.  Suppose now that $|P| \geq 2$, and $P$ has two distinguished blocks labeled $\fB_+$ and $\fB_-$.  The \emph{complex of splitting curves} $\cC_{\spl}(\Sigma)$ is the full subcomplex of $\cC_{\sep}(\Sigma)$ generated by curves $\delta$ such that one connected component of $S \cut \delta$ contains $\fB_+$ and the other contains $\fB_-$.  Looijenga~\cite[Theorem 1.5]{LooijengaSeparating} has shown that for such $\Sigma$, the complex $\cC_{\sep}(\Sigma)$ is $(g-2)$--connected.  We will prove in Proposition~\ref{gensplitconnlemma} that $\cC_{\spl}(\Sigma)$ is at least $(g-3 + \mathbbm{1}_{|\fB_+| \geq 2} + \mathbbm{1}_{|\fB_-| \geq 2})$--acyclic.  

\p{Comparing $\cC_{\spl}(\Sigma)$ to $\cC_{\sep}(\Sigma)$} Note that for $\Sigma = (S, \{\fB_+, \fB_-\})$ with $g(S) \geq 1$, we have $\cC_{\spl}(\Sigma) \neq \cC_{\sep}(\Sigma)$.  To see this, observe that the curve $\delta$ in Figure~\ref{vertfig} is not a vertex of $\cC_{\spl}(\Sigma)$, but is a vertex of $\cC_{\sep}(\Sigma)$.  

\begin{figure}[ht]
\begin{tikzpicture}
\node[anchor = south west, inner sep = 0] at (0,0){\includegraphics[scale=0.5]{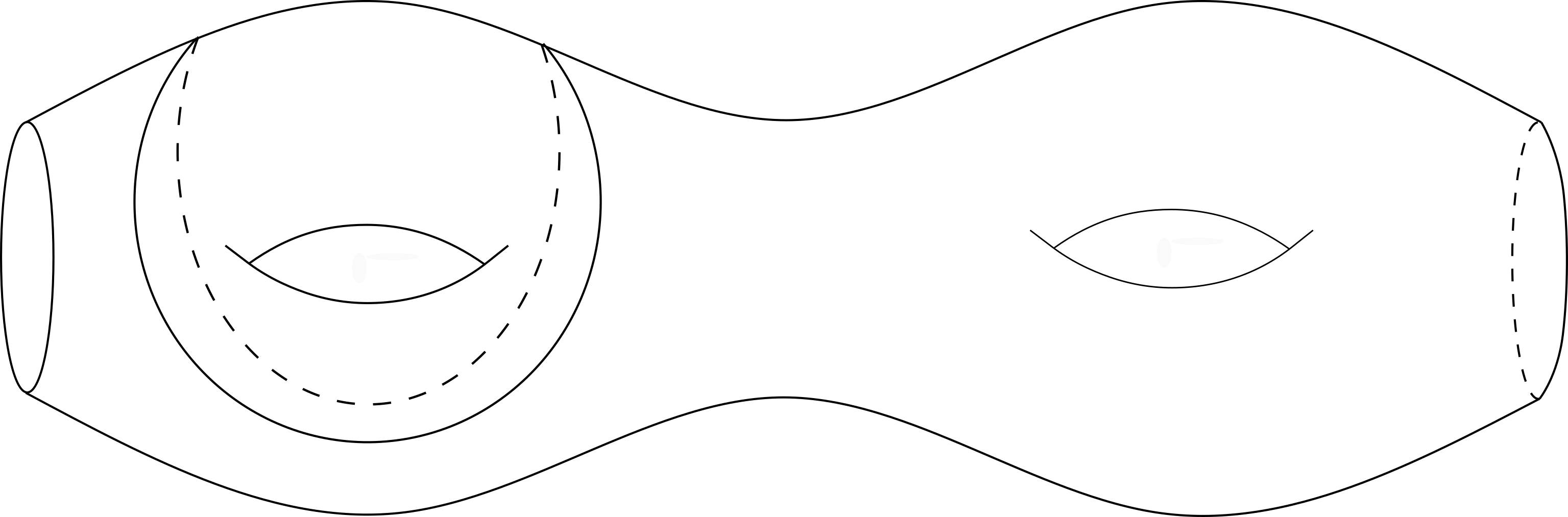}};
\node at (-0.3,1.6){\large $\fB_+$};
\node at (4.0, 1.6){\large $\delta$};
\node at (10.1, 1.6){\large $\fB_-$};
\end{tikzpicture}
\caption{The curve $\delta \in \cC_{\sep}(\Sigma)$ but not $\cC_{\spl}(\Sigma)$}\label{vertfig}
\end{figure}

\bigskip

\bn Looijenga's theorem~\cite[Theorem 1.5]{LooijengaSeparating} yields a stronger property for $\cC_{\sep}(\Sigma)$ than Proposition~\ref{gensplitconnlemma} does for $\cC_{\spl}(\Sigma)$ when either $|\fB_+|$ or $|\fB_-|$ (or both) are equal to 1.  However, if $|\fB_+|, |\fB_-| \geq 2$, Looijenga's theorem says that $\cC_{\sep}(\Sigma)$ is $(g-2)$--connected, while Proposition~\ref{gensplitconnlemma} says $\cC_{\spl}(\Sigma)$ is $(g-1)$--acyclic.  Hence if the following question could be answered, we would show that $\cC_{\sep}(\Sigma)$ is $(g-1)$--acyclic when $|\fB_+|, |\fB_-| \geq 2$.

\begin{question}
Do the relative homotopy groups $\HH_k(\cC_{\sep}(\Sigma), \cC_{\spl}(\Sigma))$ vanish for $k \leq g-1$? 
\end{question}

\bn If the answer were affirmative, then $\cC_{\sep}(\Sigma)$ would be $(g-1)$--connected via the long exact sequence in relative homotopy groups.  However, $(g-1)$ may not necessarily be the best possible bound on the connectivity of $\cC_{\sep}(\Sigma)$.  Looijenga asked the following question.

\begin{question}[{Looijenga~\cite[pg 4]{LooijengaSeparating}}]\label{Looijengaquestion}
For $\Sigma = (S,P)$ a partitioned surface, let $s(P)$ denote the number of blocks of $P$ with at least two elements.  Assume that $g(S) \geq 1$.  Is it true that $\cC_{\sep}(\Sigma)$ is $(g-4 + |P| + s(P))$--connected?
\end{question}

\bn If the answer to Question~\ref{Looijengaquestion} is affirmative, then we might expect the connectivity of $\cC_{\sep}(\Sigma)$ to be strictly higher than the connectivity of $\cC_{\spl}(\Sigma)$.  Indeed, for $\Sigma = (S,P)$ with distinguished blocks $\fB_+$, $\fB_-$ satisfying $|\fB_+|, |\fB_-| \geq 2$, we see that $g-4 + |P| + s(P) = g$, while Proposition~\ref{gensplitconnlemma} only says that $\cC_{\spl}(\Sigma)$ is $(g-1)$--connected.

\subsection{Outline of the paper}  The paper is organized into the following chunks.  
\begin{itemize}
\item General connectivity and acyclicity results (Sections~\ref{algtopsection} and~\ref{mvdksection}).
\item Proof of Proposition~\ref{gensplitconnlemma} (Section~\ref{splitsection}).
\item Proof of Theorem~\ref{homolcyclethm} (Sections~\ref{plmorseconvexsection} and~\ref{homolcurvesection}).
\end{itemize}

\bn We now give an overview of each section.

 \p{Section~\ref{algtopsection}} We discuss some general facts about connectivity and acyclicity of simplicial complexes.

\p{Section~\ref{mvdksection}} We prove Lemma~\ref{bicellularlemma}.  We begin with a packaging of some standard results about the $\Cech$--to--singular spectral sequence.  We will assume that we have some simplicial complex $A$ and a simplicial cover $\calU$ of $A$ with $\calU$ indexed in some sense by another simplicial complex $B$.  We will show that if $\tHH_k(B;\Z)$ vanishes in a range and the elements of the cover $\calU$ also satisfy some connectivity properties, then $\tHH_k(A;\Z)$ also vanishes in a range. 

\p{Section~\ref{splitsection}} We then use Proposition~\ref{bicellularlemma} to prove that the complex of splitting curves is highly acyclic. 

\p{Section~\ref{plmorseconvexsection}} We will describe how to do PL--Morse theory for cell complexes.

\p{Section~\ref{homolcurvesection}} We prove of Theorem~\ref{homolcyclethm}.  The required local connectivity properties for PL--Morse theory will be verified by inductively applying a variant of a result due to Kent--Leininger--Schleimer~\cite[Theorem 7.2]{KLS}.  The base case of this argument uses Proposition~\ref{gensplitconnlemma}.

\p{Acknowledgments}  Most of this work is part of the author's thesis.  We would like to thank our advisor Dan Margalit for many helpful math conversations and his continued support and encouragement.  We would also like Benson Farb, Eduard Looijenga, and Andy Putman for reading and providing comments on earlier drafts of this paper.  We would like to thank  Igor Belegradek, Wade Bloomquist, Katherine Williams Booth, John Etnyre, Annie Holden, Thang Le, Agniva Roy, Roberta Shapiro and Cindy Tan for helpful conversations regarding this paper.

\section{Some terminology and algebraic topology facts}\label{algtopsection}

Let $X$ be a topological space with a basepoint $x \in X$ and let $Y \subseteq X$ be a subspace with $x \in Y$.  Let $n \geq 0$ be a non--negative integer.  We say that $X$ is \emph{$n$--connected} if $\pi_k(X,x) = 0$ for every $k \leq n$.  We say that $X$ is \emph{$n$--acyclic} if $\tHH_k(X;\Z) = 0$ for every $k \leq n$.  We say that the pair $(X,Y)$ is \emph{relatively $n$--connected} if $\pi_k(X,Y) = 0$ for every $k \leq n$.  We say that the pair $(X,Y)$ is \emph{relatively $n$--acyclic} if $\HH_k(X,Y) = 0$ for every $k \leq n$.  By convention, a non--empty space is both $(-1)$--connected and $(-1)$--acyclic.  If $X$ is contractible, we will say that $X$ is $\infty$--connected.  Likewise, if $X$ has the integral homology of a point, we say that $X$ is $\infty$--acyclic.  Note also that an $n$--connected space is always $k$--connected for $k \leq n$, and similarly for acyclic.

\p{Notation}  We will let $\mathfrak{c}(X)$ and $\mathfrak{a}(X)$ denote the connectivity and acyclicity respectively of the space $X$, i.e., the maximal $n$ for which $X$ is $n$--connected (resp. $n$--acyclic).  Similarly, $\mathfrak{c}(X,Y)$ and $\mathfrak{a}(X,Y)$ will denote the connectivity and acyclicity resp. of the pair $(X,Y)$.

\p{Convention}  When we say that a space $X$ is connected, we assume that $X$ is $0$--connected, i.e., we assume that all connected spaces are nonempty.  

\medskip

\bn We require the following fact from algebraic topology (see~\cite[Lemma 2.1]{LooijengaSeparating}).

\begin{lemma}\label{joinfact}
Let $X_1,\ldots, X_n$ be a collection of topological spaces.
\begin{enumerate}[label=(\alph*)]
\item If each $X_i$ is $k_i$--connected, then the join
\begin{displaymath}
X_1 * \ldots * X_n
\end{displaymath}
\bn is $\left(-2 + \sum_{i = 1}^n \left(k_i + 2\right)\right)$--connected.
\item If each $X_i$ is $k_i$--acyclic, then the join
\begin{displaymath}
X_1 * \ldots * X_n
\end{displaymath}
\bn is $\left(-2 + \sum_{i = 1}^n \left(k_i + 2\right) \right)$--acyclic.
\end{enumerate}
\end{lemma}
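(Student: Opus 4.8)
The plan is to reduce both statements to the two-factor case $n=2$ by induction, since $X_1 * \cdots * X_n \cong (X_1 * \cdots * X_{n-1}) * X_n$ and the connectivity/acyclicity bounds add up correctly: if $X_1 * \cdots * X_{n-1}$ is $\left(-2 + \sum_{i=1}^{n-1}(k_i+2)\right)$-connected (resp. acyclic) and $X_n$ is $k_n$-connected (resp. acyclic), then by the two-factor case their join is $\left(-2 + \left(\left(-2 + \sum_{i=1}^{n-1}(k_i+2)\right)+2\right) + (k_n + 2)\right) = \left(-2 + \sum_{i=1}^{n}(k_i+2)\right)$-connected (resp. acyclic), which closes the induction. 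The base case $n=1$ is trivial. So everything comes down to: if $X$ is $p$-connected (resp. $p$-acyclic) and $Y$ is $q$-connected (resp. $q$-acyclic), then $X*Y$ is $(p+q+2)$-connected (resp. $(p+q+2)$-acyclic).

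For part (b), I would use the standard fact that the reduced homology of a join satisfies a shifted Künneth-type formula: there is a natural isomorphism $\widetilde{H}_k(X * Y; \Z) \cong \bigoplus_{i + j = k - 1} \widetilde{H}_i(X;\Z) \otimes \widetilde{H}_j(Y;\Z) \, \oplus \, \bigoplus_{i+j = k-2} \operatorname{Tor}\!\left(\widetilde{H}_i(X;\Z), \widetilde{H}_j(Y;\Z)\right)$ (one can prove this from the fact that $X * Y$ is homotopy equivalent to the suspension of the smash product, or directly from the Mayer--Vietoris sequence for the two ``cone halves'' of the join). If $\widetilde{H}_i(X;\Z) = 0$ for $i \leq p$ and $\widetilde{H}_j(Y;\Z) = 0$ for $j \leq q$, then every summand with $k \leq p + q + 2$ has either $i \leq p$ or $j \leq q$ (since $i + j \leq k - 1 \leq p + q + 1$ forces one of the two inequalities), and likewise for the $\operatorname{Tor}$ terms; hence $\widetilde{H}_k(X*Y;\Z) = 0$ for $k \leq p+q+2$. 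One should be slightly careful with reduced homology in negative degrees / the empty space, since a $(-1)$-connected space is nonempty and a $(-2)$-acyclic space is anything, but the bookkeeping works out with the convention $\widetilde{H}_{-1}(\emptyset) = \Z$.

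For part (a), I would invoke the analogous classical statement for homotopy groups of joins, which can be deduced from (b) together with the relative Hurewicz theorem, or cited directly (e.g. Milnor's work on joins, or \cite{BestvinaMorse}); alternatively one replaces $X$ and $Y$ by CW approximations and uses that $X * Y \simeq \Sigma(X \wedge Y)$ with the connectivity of a smash product of a $p$-connected and a $q$-connected space being $p + q + 1$, so the suspension is $p+q+2$-connected. The main obstacle is simply being careful about the low-dimensional and reduced-homology conventions (the $-2$ in the formula, the role of the empty complex as the join of zero spaces, and the fact that ``$(-1)$-connected'' must be read as ``nonempty''); the algebraic topology input itself is entirely standard, so I would likely state it with a reference rather than reprove it, and spend the written proof only on the inductive reduction and the degree bookkeeping.
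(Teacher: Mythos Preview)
The paper does not prove this lemma at all; it introduces it with the phrase ``We require the following standard fact from algebraic topology'' and then simply states it. Your proposal is correct and supplies exactly the standard argument one would give if asked to justify it: reduce to two factors by induction, then use the K\"unneth-type formula for $\widetilde{H}_*(X*Y)$ (equivalently, $X*Y\simeq\Sigma(X\wedge Y)$) for part (b), and the analogous homotopy statement for part (a). So you have gone beyond the paper here rather than diverged from it.
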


\section{Covers indexed by simplicial complexes}\label{mvdksection}
The main output of this section is Lemma~\ref{bicellularlemma}, which is a result about the connectivity of complexes given as the union of certain simplicial subcomplexes.  Lemma~\ref{bicellularlemma} is a rephrasing of a result of Mirzaii and van der Kallen~\cite{MirzaiivanderKallen}.  Lemma~\ref{bicellularlemma} can also be phrased using the homotopy colimit spectral sequence~\cite{Dugger}. 

\p{Bi--cellular covers} Let $X$ and $Y$ be two simplicial complexes.  Let $\biL_X$ be a function taking a vertex $y \in Y^{(0)}$ to a subcomplex of $X$.  We will always assume that $\biL_X(y) \neq \emptyset$.  If $\sigma$ is a $k$-cell of $Y$ with vertices $y_0,\ldots, y_k$, we denote
\begin{displaymath}
\biL_X(\sigma) = \biL_X(y_0) \cap \ldots \cap \biL_X(y_k).
\end{displaymath}
\bn If $\bigcup_{y \in Y^{(0)}}\biL_X(y) = X$, we will call such an $\biL_X$ a \emph{$Y$-indexed simplicial cover of $X$}.  If $\biL_X$ is a $Y$-indexed cover of $X$, then there is an associated $X$-indexed cover of $Y$ given by setting $\biL_Y(x)$ to be the full subcomplex of $Y$ generated by vertices $y$ such that $x \in \biL_X(y)$.  We call this cover $\biL_Y$ the \emph{dual cover}.  We say that a $Y$-indexed cover of $X$ is \emph{cellularly $n$--acyclic} if for each $k$-cell $\sigma$ of $Y$, the complex $\biL_X(\sigma)$ is $(n-k)$--acyclic.  We say $\biL_X$ is \emph{bi-cellularly $n$--acyclic} if $\biL_X$ and $\biL_Y$ are both cellularly $n$--acyclic.

\begin{lemma}\label{bicellularlemma}
Let $X$ and $Y$ be finite--dimensional, countable simplicial complexes.  Let $\biL_X$ be a $Y$-indexed simplicial cover of $X$.  Suppose there is an integer $n$ such that the following hold:
\begin{itemize}
\item $\biL_X$ is bi-cellularly $n$--acyclic, and
\item $Y$ is $n$--acyclic.
\end{itemize}
\bn Then $X$ is $n$--acyclic.
\end{lemma}

\bn We defer this proof for a moment.  We will introduce the main object that we use to prove Lemma~\ref{bicellularlemma}.  

\p{Bi--cellular spectral sequence} Let $X$ and $Y$ be simplicial complexes, and let $\biL_X$ be a $Y$--indexed simplicial cover of $X$.  We have a bi-graded complex called the \emph{bi-cellular complex} given by
\begin{displaymath}
C_{p,q} = \bigoplus_{\sigma \in Y^{(p)}} C_q(\biL_X(\sigma))
\end{displaymath}
\bn where $C_*$ denotes the complex of simplicial chains on a simplicial complex.

\p{The leftward versus downward strategy} Let $C_{p,q}$ be a double complex.  There are two spectral sequences associated to $C_{p,q}$, which are the leftward and downward spectral sequences.  We denote these $\bE_{*,*}^{*, \leftarrow}$ and $\bE_{*,*}^{*, \downarrow}$.  These two spectral sequences are each constructed out of $C_{p,q}$ by two different filtrations of the total complex $C_{p+q}$.  The key point is that both $\bE_{*,*}^{*, \leftarrow}$ and $\bE_{*,*}^{*, \downarrow}$ both converge to filtrations of the total homology of $C_{p,q}$.  Denote the total homology by $\HH_{p+q}(C_{*,*})$.  Suppose that we want to compute some of the groups $\bE_{p,q}^{2,\leftarrow}$.  The strategy is as follows:
\begin{enumerate}
\item Show that $\bE_{p,q}^{*,\downarrow}$ converges to $0$ in a range $0 < p + q < n$.  This implies that $\HH_{p+q}(C_{*,*})$ converges to $0$ for $0 < p + q < n$.
\item  Use the fact that $\bE_{p,q}^{*,\leftarrow}$ must also converge to $0$ for $0 < p + q < n$ to say something about the groups $\bE_{p,q}^{2, \leftarrow}$.
\end{enumerate}

\bn This is a standard technique, used for example to show that the $G$--equivariant homology of a contractible CW--complex $X$ converges to the group homology of $G$ when $G$ acts on $X$ without rotations~\cite[Section VII]{Brownbook}.

\begin{proof}[Proof of Lemma~\ref{bicellularlemma}]
We will apply the leftward versus downward strategy discussed above.  In particular, we will show that the downward bi-cellular spectral sequence converges to $\Z$ for $p +q = 0$ and converges to 0 for $0 < p + q \leq n$, and that the leftward bi-cellular spectral sequence converges to $\HH_{p+q}(X;\Z)$ for $0 \leq p + q \leq n$.  This completes the proof since the leftward and downward sequence both converge to the total homology of $C_{*,*}$.

\p{The downward sequence} On page 1 of $\bE_{*,*}^{*, \downarrow}$, we have
\begin{displaymath}
\bE_{p,q}^{1,\downarrow} = \bigoplus_{\sigma \in Y^{(p)}} \HH_q(\biL_X(\sigma);\Z).
\end{displaymath}
\bn By hypothesis, for $0 \leq p + q \leq n$ and $q > 0$ we have
\begin{displaymath}
\bE_{p,q}^{1,\downarrow} = 0.
\end{displaymath}
\bn Then for $q = 0$ in this range we have $H_0(\biL_X(\sigma);\Z) = \Z$, so 
\begin{displaymath}
\bE_{p,0}^{1,\downarrow} = C_p(Y).
\end{displaymath}
\bn Therefore, we have $\bE_{p,0}^{2,\downarrow} = \HH_p(Y;\Z)$.  Hence $\bE_{p,q}^{*,\downarrow}$ converges to $\HH_{p+q}(Y;\Z)$, which by hypothesis is $\Z$ when $p + q = 0$ and $0$ for $0 < p + q \leq n$.

\p{The leftward sequence} By the definition of the dual cover, we have
\begin{displaymath}
C_{*,*} = \bigoplus_{\sigma \in X^{(q)}} C_p(\biL_Y(\sigma)).
\end{displaymath}
\bn Then by the same argument as the downward case, the sequence $\bE_{*,*}^{*,\leftarrow}$ converges to $\HH_{p+q}(X;\Z)$ for $p + q \leq n$.  Hence $\HH_{0}(X;\Z) \cong \Z$ and $\HH_{p+q}(X;\Z) = 0$ for $0 < p + q \leq n$, so $X$ is $n$--acyclic.
\end{proof}

\section{The Complex of splitting curves}\label{splitsection}

Our main goal in this section is to prove Proposition~\ref{gensplitconnlemma}, which says that the complex of splitting curves has the homotopy type of a wedge sum of spheres.

\definition{Homologically spherical} Let $X$ be an $n$-dimensional simplicial complex.  We say that $X$ is homologically spherical if $X$ is $(n-1)$--acyclic.  In particular, this means that $X$ has the homology of a wedge sum of $n$--spheres. 

\p{Cutting curves on surfaces} Let $S$ be a surface and $S' \subseteq S$ an isotopy class of compact submanifolds.  The notation $S \cut S'$ denotes Farb and Margalit's notion of cutting surfaces~\cite{FarbMarg}.

\definition{Partitioned surface}  Following Putman~\cite{Putmantorelli}, a \emph{partitioned surface} $\Sigma = (S,P)$ is a pair consisting of a compact, connected, oriented surface $S$ and a partition $P$ of the set of boundary components of $S$.  A \emph{block of a partition} is one set in the partition.  There is a category $\mathfrak{Sur}$ called the \emph{Torelli surface category} of partitioned surfaces.  There is a morphism $(S,P) \leq  (S',P')$ if there is an embedding $\iota:S \rightarrow S'$ satisfying the following conditions.  Let $T$ denote a connected component of $S' \cut \iota(S)$.  Let $\fB_T$ denote the boundary components of $T$ that are boundary component of $S$, and similarly $\fB'_T$. We require that the following hold:
\begin{itemize}
\item for all $T \in \pi_0(S' \cut \iota(S))$, the set $\fB_T$ is contained in some $\blk \in P$, and 
\item for each $p_1,p_2 \in \blk'$ for some $\blk' \in P'$, if $p_i \in \fB_{T_i}$ for two connected components $T_1 \neq T_2$, then there is a $\blk \in P$ with $\fB_{T_1}, \fB_{T_2} \subseteq \blk$.  
\end{itemize} We will work with a poset $\tsurcat$ consisting of objects in $\mathfrak{Sur}$.  This has the same objects as $\mathfrak{Sur}$, but we require additionally that the inclusion map $\iota: \Sigma \rightarrow \Sigma'$ is not given by filling in boundary components of $\Sigma$ with discs.  For our purposes this is a more convenient object to work with than $\mathfrak{Sur}$.  

\p{Complex of separating curves} The \emph{complex of separating curves} $\cC_{\sep}(\Sigma)$ is the full subcomplex of $\cC(S)$ generated by separating curves $\delta$ such that each block $\blk \in P$ is contained entirely in one connected component of $S \cut \delta$.  A theorem of Looijenga~\cite[Theorem 1.3]{LooijengaSeparating} tells us that this complex is $(g-2)$--connected when $\left|\pi_0(\partial S)\right| \geq 2$.

\definition{Vertex complement} Following Hatcher and Margalit~\cite{HatcherMargalithomologous}, a \emph{vertex complement} is a partitioned surface $\Sigma = (S,P)$ such that:
\begin{itemize}
\item $|P| \geq 2$, and
\item there are two distinguished blocks in $P$.  One is labeled $\fB_+$ and the other is labeled $\fB_-$.
\end{itemize}
\bn We will denote a vertex complement by
\begin{displaymath}
(S, P,\fB_+, \fB_-).
\end{displaymath}
\bn The terminology ``vertex complement" refers to $\Sigma$ being the complement of a vertex of a certain complex called the complex of minimizing cycles, which we discuss in Section~\ref{homolcurvesection}.  We will say that $\Sigma$ is a \emph{vertex complement on a surface $S$} if the underlying surface of $\Sigma$ is $S$.  The set of vertex complements is a poset which we will denote $\vccat$.  We have $\Sigma \leq \Sigma'$ if:
\begin{itemize}
\item $\Sigma \leq \Sigma'$ in $\tsurcat$, and
\item there is an inclusion $\iota:S \rightarrow S'$ realizing $\Sigma \leq \Sigma'$ in $\tsurcat$ such that there are at least two connected components of $S' \cut \iota(S)$.  The two connected components are labeled $S_+$ and $S_-$ and satisfy $\partial S_+ = \fB_+ \sqcup \fB_+'$ and $\partial S_- = \fB_- \sqcup \fB_-'$.
\end{itemize}
\bn  We will denote the genus of the underlying surface $S$ in $\Sigma = (S, P, \fB_+, \fB_-)$ by $g(\Sigma)$.

\definition{The complex of splitting curves}  Let $\Sigma = (S_g^b, P,\fB_+, \fB_-)$ be a vertex complement.  The \emph{complex of splitting cycles} $\cC_{\spl}(S,P)$ is the full subcomplex of $\cC_{\sep}(S,P)$ generated by curves $\delta$ such that $\fB_+$ and $\fB_-$ are contained in separate connected components of $S \cut \delta$.

\begin{figure}[ht]
\begin{tikzpicture}
\node[anchor = south west, inner sep = 0] at (0,0)
{\includegraphics[scale=0.5]{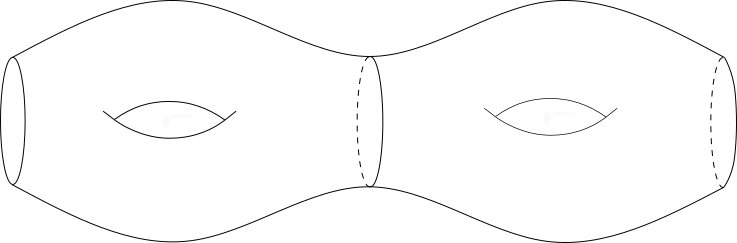}};
\node at (0.7,1.6){\large $\fB_+$};
\node at (5.3, 1.6){\large $\delta$};
\node at (10.1, 1.6){\large $\fB_-$};
\end{tikzpicture}
\caption{A splitting curve.}
\label{splitex}
\end{figure}

\medskip
\bn The remainder of this section will be devoted to the proof of the following proposition.

\begin{prop}\label{gensplitconnlemma}
Let $\Sigma = (S_g^b,P, \fB_+, \fB_-)$ be a vertex complement.  The complex $\cC_{\spl}(\Sigma)$ is homologically spherical of dimension
\begin{displaymath}
\dim(\cC_{\spl}(\Sigma)) = g -2 + \mathbbm{1}_{|\fB_+| \geq 2} + \mathbbm{1}_{|\fB_-| \geq 2}.
\end{displaymath}
\end{prop}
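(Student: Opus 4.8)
The plan is to prove Proposition~\ref{gensplitconnlemma} by induction on $-\chi(S)$, carried out simultaneously with an acyclicity statement for an auxiliary \emph{nonseparating arc complex} $\cA(\Sigma)$ (this is Lemma~\ref{noseparcconnlemma}). Write $n(\Sigma) = g(\Sigma) - 3 + \mathbbm{1}_{|B_+|\geq 2} + \mathbbm{1}_{|B_-|\geq 2}$ for the asserted bound. Since cutting a vertex complement along a curve or an arc produces surfaces whose boundary carries more refined decorations, it is convenient to set up the induction for a somewhat more general class of partitioned surfaces with two distinguished families of blocks, with vertex complements the case $|P| = 2$; the proposition is then a special case.

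\emph{Step 1: the nonseparating arc complex.} Let $\cA(\Sigma)$ be the complex whose $k$-simplices are collections of $k+1$ disjoint, pairwise non-isotopic essential arcs in $S$, each having both endpoints on boundary components in a single block, and whose union does not separate $S$. I would prove $\cA(\Sigma)$ is $n(\Sigma)$-acyclic (in fact a bit more) via Lemma~\ref{plmorsesubtofullcorr}, using for $W$ the geometric intersection number with a fixed reference arc or non-separating curve chosen in good position with respect to the blocks. Then $M(W)$ is the analogous nonseparating arc complex of the cut surface, which has strictly smaller $-\chi$ and is acyclic in the required range by induction. The descending link of a $W$-constant $k$-simplex is analyzed by the standard surgery move that reduces the intersection number of each of its arcs with the reference; this exhibits the descending link as a join of a cone with a smaller nonseparating arc complex, hence highly connected, so that $W$ is $(n(\Sigma)-1)$-acyclic and Lemma~\ref{plmorsesubtofullcorr}(c) gives the claim.

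\emph{Step 2: the bi-cellular cover.} View $X = \cC_{\spl}(\Sigma)$ and $Y = \cA(\Sigma)$ as subcomplexes of the arc-and-curve complex $\AC(S)$, and set $L_X(a) = \simpstar(a) \cap X$ and $L_Y(\delta) = \simpstar(\delta) \cap Y$, so that $L_X(a)$ is the complex of splitting curves disjoint from the arc $a$ and $L_Y(\delta)$ the complex of nonseparating arcs disjoint from the curve $\delta$. One first checks $L_X$ is a genuine $Y$-indexed cover: an essential splitting curve $\delta$ never cuts off an annulus containing $B_+$, so the $B_+$-side of $\delta$ carries a non-separating arc with endpoints on $B_+$, whence every vertex of $X$ lies in some $L_X(a)$. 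For a $k$-simplex $\sigma$ of $Y$ the complex $L_X(\sigma)$ is the complex of splitting curves of the cut surface $S \cut \sigma$, and for a $j$-simplex $\delta$ of $X$ the complex $L_Y(\delta)$ is a join, over the components of $S \cut \delta$, of nonseparating arc complexes; both are acyclic in the appropriate shifted range by the inductive hypothesis (applied to surfaces of strictly smaller $-\chi$), Step 1, and Lemma~\ref{joinfact}. Thus $L_X$ is bi-cellularly $n(\Sigma)$-acyclic while $Y$ is $n(\Sigma)$-acyclic, so Proposition~\ref{mvdkprop} yields that $X = \cC_{\spl}(\Sigma)$ is $n(\Sigma)$-acyclic, closing the induction; the finitely many small-$(-\chi)$ base cases (where $n(\Sigma) < 0$, or $\cC_{\spl}(\Sigma)$ is empty or a point) are verified by hand.

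\emph{Main obstacle.} The delicate point is the interplay between the descending-link computation of Step 1 and the arithmetic of $n(\Sigma)$ under the three basic cut operations: cutting along an arc joining two distinct components of a block lowers that block's size and may switch off an indicator term; cutting along a non-separating arc with both endpoints on one component trades a unit of genus for an extra boundary component and may switch on an indicator term; and cutting along a splitting curve partitions $\Sigma$ into two vertex complements whose genera and boundary data add. In every case one must confirm that the required acyclicity bound is preserved and that the dimension shifts in Lemma~\ref{plmorsesubtofullcorr} and Proposition~\ref{mvdkprop} match up exactly. Pinning down the non-separation condition defining $\cA(\Sigma)$, and the more general class of decorated surfaces that the induction runs over, so that all three operations behave compatibly and the bound is not wastefully slack, is where the real work is concentrated.
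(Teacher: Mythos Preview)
Your overall architecture matches the paper's---induction, an auxiliary nonseparating arc complex, and a bi-cellular comparison---but Step~2 has a genuine gap: the hypotheses of Proposition~\ref{mvdkprop} are \emph{not} satisfied, so it cannot be invoked directly.

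The failure is in the $L_Y$ direction. For a vertex $\delta \in X = \cC_{\spl}(\Sigma)$, the complex $L_Y(\delta)$ of nonseparating arcs disjoint from $\delta$ is controlled by the pieces of $S \cut \delta$. With your join description and your bound $n(\cdot)$ for the arc complex, one computes (via Lemma~\ref{joinfact}) that $L_Y(\delta)$ is only $(n(\Sigma)-1)$--acyclic, not $n(\Sigma)$--acyclic: the new boundary component $\delta$ contributes $|B_-^{\Sigma_+}| = |B_+^{\Sigma_-}| = 1$ to each side, killing both indicator terms there, and the deficit does not cancel. More drastically, if $\delta$ cuts off a piece $\Sigma_+$ of genus~$0$ on the $B_+$--side, then $L_Y(\delta)$ can have arbitrarily low acyclicity relative to $n(\Sigma)$ (this remains true even with the paper's sharper bound $g + |B_+| - 3$ for the arc complex). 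So bi-cellular $n(\Sigma)$--acyclicity fails and Proposition~\ref{mvdkprop} does not apply.

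The paper meets exactly this obstruction and works around it by opening up the leftward spectral sequence rather than quoting Proposition~\ref{mvdkprop}. The key extra ingredient is a trade-off lemma (Lemma~\ref{splspecdeclink}): whenever $H_p(L_Y(\delta)) \neq 0$ for some $p \leq n(\Sigma)$, the genus shortfall on the $\Sigma_+$--side forces enough genus onto $\Sigma_-$ that $\cC_{\spl}(\Sigma_-)$ is $(q-1)$--acyclic for the complementary $q$. This is then fed into a PL--Morse--style filtration of the page--$1$ column $\bE^{1,\leftarrow}_{p,*}$ (Lemma~\ref{downconvlemma}) to kill the offending homology. Your outline is missing this compensation mechanism; without it the induction does not close, and your ``Main obstacle'' paragraph, while correctly flagging the arithmetic of $n(\Sigma)$ under cutting as delicate, does not identify or resolve this specific shortfall.
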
 \bn Here, $\mathbbm{1}_{\mathfrak{S}}$ denotes the indicator function, i.e, the function whose value is $1$ if statement $\mathfrak{S}$ is true, and $0$ otherwise.  

\subsection{The nonseparating arc complex}\label{noseparcsection}

Let $S = S_g^b$ be a surface and $\Sigma = (S,P, \fB_+, \fB_-)$ be a vertex complement.  Let $Q$ be a set of points on boundary components of $\fB_+$ such that $\left|Q \right| \geq 2$ and such that each element of $\fB_+$ contains at least one point in $Q$.  Such a $Q$ will be called an \emph{admissible marking}.  The \emph{arc complex} (see, e.g., Schleimer \cite{Schleimernotes}) $\cA(\Sigma, Q)$ is a complex defined as follows:
\begin{itemize}
\item a vertex of $\cA(\Sigma, Q)$ is an isotopy class of arcs with endpoints distinct points of $Q$; and
\item a $k$--cell is $k+1$ vertices which can be represented by $k+1$ arcs that only intersect possibly at their endpoints.
\end{itemize}  It is known that $\cA(\Sigma,Q)$ is contractible \cite{Hatcher}.  Likewise, the \emph{nonseparating arc complex} $\cA_{\nosep}(\Sigma, Q)$ is defined as above, except we additionally require that if $\sigma = \{\alpha_0,\ldots, \alpha_k\}$ is a $k$--cell, then $S \cut \sigma$ is connected.  Harer proves the following.

\begin{lemma}[{{\cite[Theorem 1.3]{Harerstability}}}]\label{harerlemma}
Let $\Sigma$ be a vertex complement and $Q$ an admissable marking.  The complex $\cA_{\nosep}(\Sigma, Q)$ is $(2g - 3 + \left|\fB_+\right|)$--connected.
\end{lemma}

\subsection{The outline of the of Proposition~\ref{gensplitconnlemma}}\label{splitsubsection}
We now show that $\cC_{\spl}(\Sigma)$ is homotopically spherical.  We will begin by describing the outline of the proof of Proposition~\ref{gensplitconnlemma}.  We will then carry out some of the steps in the outline in a pair of lemmas.  We will conclude Section~\ref{splitsubsection}, and Section~\ref{splitsection}, by proving Proposition~\ref{gensplitconnlemma}.

\p{The setup of the proof of Proposition~\ref{gensplitconnlemma}}  The proof follows by induction on the poset $\vccat$.  Let $\Sigma = (S, P, \fB_+, \fB_-)$ be a vertex complement such that Proposition~\ref{gensplitconnlemma} holds for all $\Tau < \Sigma$.  Suppose without loss of generality that $|\fB_+| \geq |\fB_-|$.  Choose a set $Q$ of compact subintervals of $\partial \Sigma$ with $|Q|$ minimal such that $(S,Q) \in \marksur$.  Let $\cA = \cA_{\nosep}(\Sigma,Q)$.  If $\sigma$ is a cell of $\cC_{\spl}(\Sigma)$, let $\biL_{\cA}(\sigma)$ be the full subcomplex of  $\cA$ generated by arcs disjoint from $\sigma$.  Similarly, for cells $\tau \in \cA$, let $\biL_{\cC}(\tau)$ denote the subcomplex of $\cC_{\spl}(\Sigma)$ generated by cells $\sigma$ with $\sigma$ disjoint from $\tau$.  Let $C_{p,q}$ be  the bi-cellular complex
\begin{displaymath}
\bigoplus_{\sigma \in \cA^{(p)}} C_q(\biL_{C}(\sigma)).
\end{displaymath}
\bn Let $\bE_{*,*}^{*, \downarrow}$ and $\bE_{*,*}^{*, \leftarrow}$ be the downward and leftward bi-cellular spectral sequences respectively.  The goal is to prove the following two facts:
\begin{enumerate}
\item the downward sequence converges to $\Z$ for $p + q = 0$ and converges to $0$ for $0 < p + q < \dim(\cC_{\spl}(\Sigma))$, and 
\item the leftward sequence converges to $\HH_{p+q}(\cC_{\spl}(\Sigma))$ for $0 \leq p + q< \dim(\cC_{\spl}(\Sigma))$.  
\end{enumerate}
\bn Given these two facts, $\tHH_{p+q}(\cC_{\spl}(\Sigma)) = 0$ for $0 \leq p + q < \dim(C_{\spl}(\Sigma))$.  We will then prove simple connectivity separately.  

\subsection{Convergence of the leftward sequence} Fact (1) follows from essentially the same argument as in the proof of Lemma~\ref{bicellularlemma} so the bulk of Section~\ref{splitsubsection} will be devoted to proving Fact (2), which is recorded as the following result.

\begin{lemma}\label{downconvlemma}
Let $\Sigma \in \vccat$ such that for every $\Tau < \Sigma$, the complex $\cC_{\spl}(\Tau)$ is homologically spherical.  Let $\bE_{p,q}^{1, \leftarrow}$ be the leftward spectral sequence discussed above.  Then
\begin{displaymath}
\bE_{p,q}^{1,\leftarrow} \Rightarrow \HH_{p +q}(\cC_{\spl}(\Sigma);\Z)
\end{displaymath} 
\bn for $p+q < \dim(\cC_{\spl}(\Sigma))$.
\end{lemma}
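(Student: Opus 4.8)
The strategy is the ``leftward versus downward'' spectral sequence argument of Section~\ref{mvdksection}, but now run with the roles of the indexing complex and the covered complex adapted so that the \emph{leftward} sequence is the one whose $E^1$ page we need to understand. Recall $C_{p,q} = \bigoplus_{\sigma \in \cA_{\nosep}(\Sigma,Q)^{(p)}} C_q(L_C(\sigma))$. The leftward sequence has $E^1$ page $\bE^{1,\leftarrow}_{p,q} = \bigoplus_{\tau \in \cC_{\spl}(\Sigma)^{(q)}} H_p(L_{\cA_{\nosep}}(\tau);\Z)$, after identifying $C_{p,q} \cong \bigoplus_{\tau \in \cC_{\spl}(\Sigma)^{(q)}} C_p(L_{\cA_{\nosep}}(\tau))$ just as in the proof of Proposition~\ref{mvdkprop}. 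The claim is that this sequence converges to $H_{p+q}(\cC_{\spl}(\Sigma);\Z)$ in the range $p+q < \dim(\cC_{\spl}(\Sigma))$, which amounts to showing the $E^2$ page is concentrated on the $p=0$ column (where it computes $C_q(\cC_{\spl}(\Sigma))$, hence $H_q(\cC_{\spl}(\Sigma))$ on $E^3 = E^\infty$) in that range.

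\textbf{Key steps.}
First I would verify the identification $C_{p,q} \cong \bigoplus_{\tau \in \cC_{\spl}(\Sigma)^{(q)}} C_p(L_{\cA_{\nosep}}(\tau))$ and record that $L_{\cA_{\nosep}}(\tau)$ is, by definition, the full subcomplex of $\cA_{\nosep}(\Sigma,Q)$ on arcs disjoint from $\tau$; cutting along $\tau$ identifies this with $\cA_{\nosep}((\Sigma,Q)\cut\tau)$, which decomposes as a join $\cA_{\nosep}(\Sigma_1,Q_1) * \cdots * \cA_{\nosep}(\Sigma_m,Q_m)$ over the connected components of $(\Sigma,Q)\cut\tau$. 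Second, for a $q$-cell $\tau$ of $\cC_{\spl}(\Sigma)$, I would track how the invariants $g$ and $|B_+|$ behave under cutting: since $\tau$ consists of splitting curves separating $B_+$ from $B_-$, each component inherits a marked vertex complement structure and the genera add while the $|B_+^j|$ have a controlled sum (analogous to the Type~(3) bookkeeping in the proof of Lemma~\ref{noseparcconnlemma}). Third, I would apply Lemma~\ref{noseparcconnlemma} to each $\cA_{\nosep}(\Sigma_j,Q_j)$ and combine via the join estimate Lemma~\ref{joinfact}(b): this gives that $L_{\cA_{\nosep}}(\tau)$ is $(g + |B_+| - 3 - q + (\text{correction}))$-acyclic, and the accounting should show this exceeds $\dim(\cC_{\spl}(\Sigma)) - q$ minus the needed slack, so that $H_p(L_{\cA_{\nosep}}(\tau);\Z) = 0$ for $0 < p$ in the relevant range and $= \Z$ for $p = 0$ (using that $L_{\cA_{\nosep}}(\tau)$ is nonempty, which is where the base-case nonemptiness in Lemma~\ref{noseparcconnlemma} is used). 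Fourth, with $\bE^{1,\leftarrow}_{p,q}$ vanishing for $p > 0$ in the range $p+q < \dim(\cC_{\spl}(\Sigma))$ and equal to $C_q(\cC_{\spl}(\Sigma))$ for $p = 0$, the spectral sequence degenerates and converges to $H_{p+q}(\cC_{\spl}(\Sigma);\Z)$ there.

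\textbf{Main obstacle.}
The delicate point is the bookkeeping in the third step: I must confirm that for \emph{every} $q$-cell $\tau$ of $\cC_{\spl}(\Sigma)$ the acyclicity of $L_{\cA_{\nosep}}(\tau)$ coming from Lemma~\ref{noseparcconnlemma} and the join formula is high enough to kill $\bE^{1,\leftarrow}_{p,q}$ for all $0 < p$ with $p + q < \dim(\cC_{\spl}(\Sigma))$ — in particular I need a sharp value of $\dim(\cC_{\spl}(\Sigma))$ (governed by the maximal number of disjoint splitting curves, roughly $g - 1$ plus boundary corrections) and I need the marked-interval data $Q$ to restrict sensibly to each piece of $(\Sigma,Q)\cut\tau$ so that each $(\Sigma_j,Q_j) \in \marksur$. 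A secondary subtlety is ensuring $L_{\cA_{\nosep}}(\tau)$ is never empty in the range in question, so that the $p=0$ row genuinely reproduces the chain complex of $\cC_{\spl}(\Sigma)$; this follows from the nonemptiness assertions built into the base cases of Lemma~\ref{noseparcconnlemma}, but it needs to be checked component-by-component. Once these combinatorial inequalities are pinned down, the rest is the formal spectral sequence argument already rehearsed in the proof of Proposition~\ref{mvdkprop}.
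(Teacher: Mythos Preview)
Your approach has a genuine gap in the third step. You claim that for a $q$-cell $\tau$ of $\cC_{\spl}(\Sigma)$ the complex $L_{\cA_{\nosep}}(\tau)$ decomposes as a join over the components of $(\Sigma,Q)\cut\tau$ and is therefore roughly $(g + |B_+| - 3 - q)$-acyclic. This is not so. Every arc in $\cA_{\nosep}(\Sigma,Q)$ has both endpoints in $Q \subseteq B_+$, so any arc disjoint from $\tau$ lies entirely in the single component $\Sigma_+^\tau$ of $\Sigma\cut\tau$ that contains $B_+$; the remaining components carry no marked intervals and contribute nothing to the join. Thus $L_{\cA_{\nosep}}(\tau) \cong \cA_{\nosep}(\Sigma_+^\tau, Q)$, and Lemma~\ref{noseparcconnlemma} only yields acyclicity $g(\Sigma_+^\tau) + |B_+| - 3$. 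Since $g(\Sigma_+^\tau)$ can be as small as $0$ (when $|B_+|\geq 2$) or $1$ (when $|B_+|=1$), independently of $q$, this bound falls far short of what is needed to make $\bE^{1,\leftarrow}_{p,q}$ vanish for all $p>0$ in the stated range. The paper says exactly this just after stating Lemma~\ref{splspecdeclink}: Lemma~\ref{noseparcconnlemma} does \emph{not} tell us that $L_{\cA_{\nosep}(\Sigma,Q)}(\delta)$ is $p$-acyclic.

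The paper's argument is therefore substantively different from yours. It never claims $\bE^{1,\leftarrow}_{p,q}=0$ for $p>0$; instead it shows $\bE^{2,\leftarrow}_{p,q}=0$ for $p>0$ by running a PL--Morse--style filtration on the chain complex $\bE^{1,\leftarrow}_{p,*}$ for each fixed $p$. The key trade-off is Lemma~\ref{splspecdeclink}: whenever $H_p(L_{\cA}(\delta))\neq 0$ (forcing $g(\Sigma_+^\delta)$ to be small), the complementary piece $\Sigma_-^\delta$ has correspondingly large genus, and the \emph{inductive hypothesis} on $\cC_{\spl}(\Tau)$ for $\Tau<\Sigma$ then makes $\cC_{\spl}(\Sigma_-^\delta)$ at least $(q-1)$-acyclic. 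Filtering $\cC_{\spl}(\Sigma)$ by the weight $W(\delta)=g(\Sigma_+^\delta)+|B_+|-3$, one uses this acyclicity of the ascending links $a_W(\delta)\cong \cC_{\spl}(\Sigma_-^\delta)$ to prove that each filtration step contributes trivially to $H_q$. Your proposal never invokes the inductive hypothesis on smaller vertex complements, which is precisely the missing ingredient.
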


\bn The idea of the proof is to use a variant of PL--Morse theory for homology in coefficient systems.  In particular, we will prove the following lemma.

\begin{lemma}\label{splspecdeclink} Let $\Sigma \in \vccat$ be as in Lemma~\ref{downconvlemma}.  Let $p,q \geq 0$ be integers with $p > 0$ and $p + q < \dim(\cC_{\spl}(\Sigma))$.  Let $\delta \in \cC_{\spl}(\Sigma)$.  Label the connected components of $\Sigma \cut \delta$ by $\Sigma_+$ and $\Sigma_-$ where $\fB_+ \subseteq \Sigma_+$ and $\fB_- \subseteq \Sigma_-$.  Suppose that $g(\Sigma_+) + |\fB_+| -3 < p$.  Then $\cC_{\spl}(\Sigma_-)$ is at least $(q-1)$--acyclic. 
\end{lemma}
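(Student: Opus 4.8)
\emph{The plan.} The hypothesis concerns $L_{\cA_{\nosep}(\Sigma,Q)}(\delta)$, the full subcomplex of $\cA_{\nosep}(\Sigma,Q)$ spanned by the arcs disjoint from $\delta$. The plan is to identify this complex with a nonseparating arc complex supported on the $B_+$--side of $\delta$, use its nonvanishing homology together with Lemma~\ref{noseparcconnlemma} to bound $g(\Sigma_+)$ from above, transfer the remaining genus onto $\Sigma_-$, and then invoke the inductive hypothesis. Throughout I would equip $\Sigma_+$ and $\Sigma_-$ with the vertex complement structures whose boundary partitions are $\{B_+,\{\delta\}\}$ and $\{B_-,\{\delta\}\}$; since every interval of $Q$ lies on a component of $B_+\subseteq\Sigma_+$ and $\delta$ is disjoint from $B_+$, the collection $Q$ is inherited unchanged by the $B_+$--side, so $(\Sigma_+,Q)\in\marksur$.

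\emph{Identifying the link complex and bounding the genus.} Every arc of $\cA_{\nosep}(\Sigma,Q)$ has both endpoints on components of $B_+$, so an arc disjoint from $\delta$ lies in $\Sigma_+$; I would first show $L_{\cA_{\nosep}(\Sigma,Q)}(\delta)=\cA_{\nosep}(\Sigma_+,Q)$. Indeed, a simplex $\sigma$ of the former is a system of pairwise disjoint arcs in $\Sigma_+$ with endpoints in distinct intervals of $Q$ whose complement $S\cut\sigma$ is connected, and since the underlying surface is $\Sigma_+\cup_\delta\Sigma_-$ with $\sigma$ disjoint from $\delta$, connectedness of $S\cut\sigma$ is equivalent to connectedness of $\Sigma_+\cut\sigma$, which is exactly the defining condition for $\sigma$ to be a simplex of $\cA_{\nosep}(\Sigma_+,Q)$. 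Lemma~\ref{noseparcconnlemma} then says $\cA_{\nosep}(\Sigma_+,Q)$ is $\bigl(g(\Sigma_+)+|B_+|-3\bigr)$--acyclic. Because $p>0$, reduced and ordinary homology agree in degree $p$, so $H_p(L_{\cA_{\nosep}(\Sigma,Q)}(\delta);\Z)\neq 0$ forces $p>g(\Sigma_+)+|B_+|-3$, i.e. $g(\Sigma_+)\leq p-|B_+|+2$; and since $\delta$ is separating, $g(\Sigma_+)+g(\Sigma_-)=g(\Sigma)$, so $g(\Sigma_-)\geq g(\Sigma)-p+|B_+|-2$.

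\emph{Applying the inductive hypothesis.} Assuming $\Sigma_-<\Sigma$ in $\vccat$, Proposition~\ref{gensplitconnlemma} applies to $\Sigma_-$, and since one of its blocks is the singleton $\{\delta\}$ it gives
\[
\mathfrak{a}\bigl(\cC_{\spl}(\Sigma_-)\bigr)\ \geq\ g(\Sigma_-)-3+\mathbbm{1}_{|B_-|\geq 2}\ \geq\ g(\Sigma)-p+|B_+|-5+\mathbbm{1}_{|B_-|\geq 2}.
\]
The right-hand side is $\geq q-1$ exactly when $p+q\leq g(\Sigma)+|B_+|-4+\mathbbm{1}_{|B_-|\geq 2}$, which I would deduce from the hypothesis $p+q\leq g(\Sigma)-3+\mathbbm{1}_{|B_+|\geq 2}+\mathbbm{1}_{|B_-|\geq 2}$ together with the elementary inequality $\mathbbm{1}_{|B_+|\geq 2}\leq|B_+|-1$ (valid since $|B_+|\geq 1$). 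This yields the asserted $(q-1)$--acyclicity of $\cC_{\spl}(\Sigma_-)$.

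\emph{The main obstacle.} I expect the real work to be in the first identification — pinning down that restricting $\cA_{\nosep}(\Sigma,Q)$ to arcs disjoint from $\delta$ returns \emph{precisely} $\cA_{\nosep}(\Sigma_+,Q)$, with the marked-interval data matching up exactly — and in verifying that $\Sigma_-$ genuinely lies below $\Sigma$ in the poset over which the induction of Proposition~\ref{gensplitconnlemma} runs, so that the inductive hypothesis is legitimately available. Once those are settled the remaining numerology is routine bookkeeping; the $\mathbbm{1}$--terms in Proposition~\ref{gensplitconnlemma} supply exactly the slack that makes the final inequality close.
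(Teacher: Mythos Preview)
Your proposal is correct and follows essentially the same route as the paper: set $p'=g(\Sigma_+)+|B_+|-3$, use Lemma~\ref{noseparcconnlemma} together with $H_p\neq 0$ to force $p'<p$, convert this into a lower bound on $g(\Sigma_-)$, and then invoke the inductive hypothesis on $\Sigma_-$. Your identification $L_{\cA_{\nosep}(\Sigma,Q)}(\delta)=\cA_{\nosep}(\Sigma_+,Q)$ is exactly what the paper uses implicitly when it appeals to Lemma~\ref{noseparcconnlemma}, and your closing inequality $\mathbbm{1}_{|B_+|\ge 2}\le |B_+|-1$ is the same slack the paper exploits (it absorbs the indicator terms into $|B_+|$ in a slightly different order, but the arithmetic is identical).
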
 

\bn Note that the assumption on $\delta$ in Lemma~\ref{splspecdeclink} says that if $(\Sigma, Q) \in \marksur$, then Lemma \ref{harerlemma} does not necessarily tell us that the homology group $\HH_p(\biL_{\cA}(\delta);\Z)$ vanishes.  The content of Lemma~\ref{splspecdeclink} is that for these $\delta$, the acyclicity of $C_{\spl}(\Sigma_-)$ is high enough to allow us to carry out something resembling PL--Morse theory inside the spectral sequence $\bE_{p,q}^{1, \leftarrow}$.

\begin{proof}[Proof of Lemma~\ref{splspecdeclink}] Let $p' = g(\Sigma_+) + |\fB_+| - 3$.  We have assumed that $p' < p$, so we have
\begin{displaymath}
\dim(\cC_{\spl}(\Sigma)) - p' \geq q + 2,
\end{displaymath}
\bn since $p + q < \dim(\cC_{\spl}(\Sigma))$.  Now, we have 
\begin{displaymath}
\dim(\cC_{\spl}(\Sigma)) = g(\Sigma) -2 + \mathbbm{1}_{|\fB_-| \geq 2} + \mathbbm{1}_{|\fB_+| \geq 2}.
\end{displaymath}
\bn We also have $g(\Sigma_+) + g(\Sigma_-) = g(\Sigma)$.  Therefore, we have
\begin{align*}
\dim(\cC_{\spl}(\Sigma_-)) &= g(\Sigma_-) -2 + \mathbbm{1}_{|\fB_-| \geq 2}\\
&= g(\Sigma) - g(\Sigma_+) + \mathbbm{1}_{|\fB_-| \geq 2} -2.
\end{align*} We now substitute in $-g(\Sigma_+) = |B_+| - 3 - p' \geq |B_+| -3 - p$ to see that
\begin{align*}
\dim(\cC_{\spl}(\Sigma_-)) &\geq g(\Sigma) - p -5 +|\fB_+| + \mathbbm{1}_{|B-| \geq 2}\\
&= \dim(\cC_{\spl}(\Sigma)) - \mathbbm{1}_{|\fB_+| \geq 2} - p' - 3 + |\fB_+|\\
&\geq q -1 +|\fB_+| - \mathbbm{1}_{|\fB_+| \geq 2}\\
&\geq q .
\end{align*}
\bn Then $\Sigma_- < \Sigma$, so by hypothesis we have $\mathfrak{a}(\cC_{\spl}(\Sigma_-)) \geq q-1$.
\end{proof}

\bn We are now ready to prove Lemma~\ref{downconvlemma}. 

\begin{proof}[Proof of Lemma~\ref{downconvlemma}]
On page 1, the sequence $\bE_{p,q}^{1,\leftarrow}$ is given by
\begin{displaymath}
\bE_{p,q}^{1,\leftarrow} = \bigoplus_{\tau \in \cC_{\spl}(\Sigma)^{(q)}}\HH_p(\biL_{\cA}(\tau)).
\end{displaymath}
\bn Observe that in the column $p = 0$, the chain complex $\bE_{0,q}^{1,\leftarrow}$ is identified with $C_*(\cC_{\spl}(\Sigma))$, since Lemma \ref{harerlemma} implies that each cell $\tau \in \cC_{\spl}(\Sigma)$ has $\biL_{\cA}(\tau)$ connected except in the case that the connected component $\Sigma_+$ of $\Sigma \cut \tau$ containing $B_+$ has $|\fB_+| = 2$ and $g(\Sigma_+) = 0$.  For each vertex $\delta \in \cC_{\spl}(\Sigma)$ with $g(\Sigma_+) = 0$, choose an arc $\alpha_{\delta} \in \biL_{\cA}(\delta)$.  Let $K$ denote the set of such $\delta$, and observe that no cell in $\cC_{\spl}(\Sigma)$ contains multiple vertices in $K$.  Therefore, the choice of $\alpha$ for each $\delta \in K$ induces a section $\rho$ of the natural map $\bE_{0,q}^{1, \leftarrow} \rightarrow C_*(\cC_{\spl}(\Sigma);\Z)$ as follows. If $\tau \in \cC_{\spl}(\Sigma)$ is a cell with $\tau^0 \cap K = \emptyset$, $\rho$ sends a generator of $\Z[\tau]$ to a generator of $\HH_0(\biL_{\cA}(\tau);\Z)$.  Otherwise, if $\delta \in \tau^0 \cap K$, $\rho$ sends a generator of $\Z[\tau]$ to the class $[\alpha] \in \HH_0(\biL_{\cA}(\tau);\Z)$.  Hence it is enough to show that $\bE_{p,q}^{2,\leftarrow} = 0$ for $p > 0$ and $0 < p + q < \dim(\cC_{\spl}(\Sigma))$. 
 
Pick a pair $p,q$ with $p > 0$ and $p + q < \dim(\cC_{\spl}(\Sigma))$.  We will show that $\bE_{p,q}^{2,\leftarrow} = 0$.  For a vertex $\delta \in \cC_{\spl}(\Sigma)$, let $W(\delta) = g(\Sigma_+) +|\fB_+| - 3$, where $\Sigma_+$ is the connected component of $\Sigma \cut \gamma$ that contains $\fB_+$.  Let $\Sigma_-$ be the connected component of $\Sigma \cut \delta$ containing $\fB_-$,.  An example of $\delta$, $\Sigma_+$ and $\Sigma_-$ can be seen in Figure~\ref{splitdecex}.

\begin{figure}[ht]
\begin{tikzpicture}
\node[anchor=south west, inner sep = 0] at (0,0){\includegraphics[scale = 0.7]{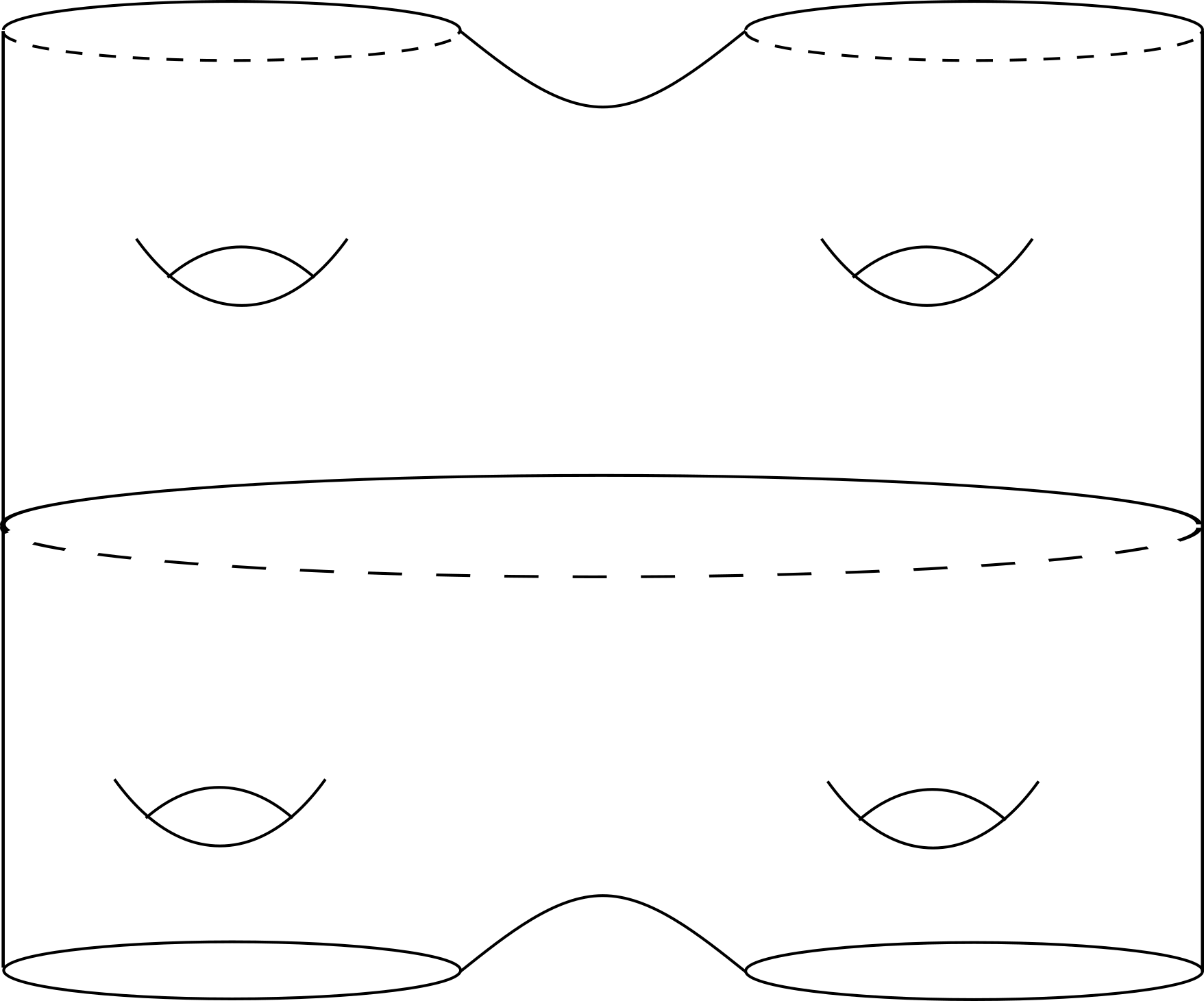}};
\node at (4.0,3.7){\large $\delta$};
\node at (4.0,5){\large $\Sigma_{-}$};
\node at (4.0, 2){\large $\Sigma_+$};
\node at (4.0, -0.6){\large $\fB_+$};
\node at (4.0, 6.8){\large $\fB_-$};
\end{tikzpicture}
\caption{A curve $\delta$ with $W(\delta) = 1$.}\label{splitdecex}
\end{figure}

\bn Let $\bE_{p,*}^{1, \leftarrow}$ denote the chain complex in the $p$th column of the spectral sequence $\bE_{*,*}^{1, \leftarrow}$.  We will show that $\bE_{p,q}^{2, \leftarrow} = \HH_q\left(\bE_{p,*}^{1, \leftarrow};\Z\right) = 0$.  There is a filtration $X_k$ of $\cC_{\spl}(\Sigma)$ given by $X_k =  \bigcup_{\tau \in F_k} \tau $, where
\begin{displaymath}
F_k = \left\{\tau \subseteq \cC_{\spl}(\Sigma): \min_{\delta \in \tau^{(0)}} W(\delta) \geq p - k\right\}
\end{displaymath}
\bn This filtration starts at $k = 0$ and runs to $k = p$.  Each $X_k$ induces a subcomplex of $\bE_{p,*}^{1,\leftarrow}$ by
\begin{displaymath}
\bE_{p,q}^{1, \leftarrow}(X_k) = \bigoplus_{\tau \in \cC_{\spl}(\Sigma)^{(q)}: \tau \subseteq X_k} \HH_p(\biL_{\cA}(\tau)).
\end{displaymath}
\bn  By construction, these $\bE_{p,*}^{1, \leftarrow}(X_k)$ are a filtration of $\bE_{p,*}^{1, \leftarrow}$.  It therefore suffices to prove the following two facts:
\begin{itemize}
\item $\HH_q(\bE_{p,*}^{1, \leftarrow}(X_0)) = 0$, and
\item $\HH_q(\bE_{p,*}^{1, \leftarrow}(X_k), \bE_{p,*}^{1, \leftarrow}(X_{k-1})) = 0$ for $1 \leq k \leq p$.
\end{itemize}
\bn We will prove each of these in turn.

\p{$\HH_q(\bE_{p,*}^{1, \leftarrow}(X_0);\Z) = 0$}  Each $\tau \in X_0$ has $\mathfrak{a}(\biL_{\cA}(\tau)) \geq p$ by Lemma \ref{harerlemma}.  Hence $\bE_{p,*}^{1, \leftarrow}(X_0)$ is identically $0$.  

\p{$\HH_q(\bE_{p,*}^{1, \leftarrow}(X_k), \bE_{p,*}^{1, \leftarrow}(X_{k-1});\Z) = 0$ for $1 \leq k \leq p$}  Let 
\begin{displaymath}
\Delta_k = \{\delta \in \cC_{\spl}(\Sigma): W(\delta) = p - k\}.
\end{displaymath}
\bn Note that $\Delta_k$ is a discrete set, i.e., no two vertices in $\Delta_k$ are adjacent.  Assume that $\Delta_k$ is ordered arbitrarily, which induces a filtration $Y_k^j$ of $X_k$, where $Y_k^j$ is given by attaching to $X_{k-1}$ the first $j$ elements in the order on $\Delta_k$.  Each $Y_k^j$ induces a subcomplex of $\bE_{p,*}^{1, \leftarrow}(X_k)$, which we denote $\bE_{p,*}^{1, \leftarrow}(Y_k^j)$.  We have the following claim.

\p{Claim} For any $j \geq 1$, the relative homology $\HH_q(\bE_{p,*}^{1, \leftarrow}(Y_k^j), \bE_{p,*}^{1, \leftarrow}(Y_k^{j-1})) = 0$.

\bigskip

\bn Given the claim, we have $\HH_q(\bE_{p,*}^{1, \leftarrow}(X_k), \bE_{p,*}^{1, \leftarrow}(X_{k-1})) = 0$ for $1 \leq k \leq p$.  Since $\HH_q(\bE_{p,*}^{1, \leftarrow}(X_0))= 0$, we have $\HH_q(\bE_{p,*}^{1, \leftarrow}(X_p)) = \HH_q(\bE_{p,*}^{1, \leftarrow}) = 0$, as desired.

\p{Proof of claim}  Let $\delta$ be the unique vertex in $Y_k^j \setminus Y_k^{j-1}$.  If $\delta \in \cC_{\spl}(\Sigma)$ is a vertex, let $a_W(\delta)$ denote the subcomplex of $\cC_{\spl}(\Sigma)$ consisting of all $\delta' \in \lk(\delta)$ with $W(\delta') > W(\delta)$.  Note that if $\tau \subseteq a_W(\delta)$ is a cell, the inclusion map $\biL_{\cA}(\tau * \delta) \hookrightarrow \biL_{\cA}(\delta)$ is an isomorphism.  Hence, there is short exact sequence of chain complexes
\begin{displaymath}
0 \rightarrow C_*(a_W(\delta)) \otimes \HH_p(\biL_{\cA}(\delta);\Z) \rightarrow \bE_{p,*}^{1, \leftarrow}(Y_k^{j-1}) \oplus C_*\left(\delta *a_W(\delta)\right) \otimes \HH_p(\biL_{\cA}(\delta);\Z) \rightarrow \bE_{p,*}^{1, \leftarrow}(Y_k^{j})\rightarrow 0.
\end{displaymath}
\bn Now, by Lemma~\ref{splspecdeclink} and the assumption that Lemma \ref{gensplitconnlemma} holds for all $\Tau \subsetneq \Sigma$, we have $\tHH_{k}(a_W(\delta)) = 0$ for $k \leq q -1$, and thus we have
\begin{itemize}
\item $\HH_{k}(a_W(\delta) \otimes \HH_p(\biL_{\cA}(\delta);\Z)) = 0$ for $1 \leq k \leq q -1$, and 
\item $\HH_{k}(a_W(\delta) \otimes \HH_p(\biL_{\cA}(\delta);\Z)) =  \HH_p(\biL_{\cA}(\delta);\Z)$.
\end{itemize}
\bn  The complex $\delta * a_W(\delta)$ is contractible, so we have
\begin{itemize}
\item $\HH_{k}(\left( \delta * a_W(\delta)\right) \otimes \HH_p(\biL_{\cA}(\delta);\Z) = 0$ for $1 \leq k $, and 
\item $\HH_{0}(\left( \delta * a_W(\delta)\right) \otimes \HH_p(\biL_{\cA}(\delta);\Z) = \Z$.
\end{itemize}
\bn Therefore the  pushforward map
\begin{displaymath}
\HH_k(\bE_{p,*}^{1, \leftarrow}(Y_k^j)) \rightarrow \HH_k(\bE_{p,*}^{1, \leftarrow}(Y_k^{j-1});\Z))
\end{displaymath}
\bn is surjective for $k = q$ and an isomorphism for $k  \leq q-1$, so the claim holds.
\end{proof}

\subsection{Homologically spherical} We are now ready to show that $\cC_{\spl}(\Sigma)$ is homologically spherical.  
\begin{proof}[Proof of Proposition \ref{gensplitconnlemma}]
We will induct on the poset $\vccat$.  Namely, if $\Sigma$ is a vertex complement, we will assume that Proposition~\ref{gensplitconnlemma} holds for all $\Tau \in \vccat$ with $\Tau < \Sigma$, and show that it holds for $\Sigma$ as well.

\p{Base cases}  Our base cases are given by any choice of $g$, $|\fB_+|$ and $|\fB_-|$ such that
\begin{displaymath}
g - 2+ \mathbbm{1}_{|\fB_+|\geq 2} + \mathbbm{1}_{|\fB_-| \geq 2} \geq 0.
\end{displaymath} 
\bn This is equivalent to
\begin{displaymath}
g + \mathbbm{1}_{|\fB_+|\geq 2} + \mathbbm{1}_{|\fB_-| \geq 2} \geq 2,
\end{displaymath}
and all such choices of $g, |\fB_+|, |\fB_-|$ are as follows:
\begin{itemize}
\item $g \geq 2$,
\item $g \geq 1, |\fB_+| \geq 2$ or $|\fB_-| \geq 2$, and
\item $|\fB_+|, |\fB_-| \geq 2$.
\end{itemize}
\bn  In all these cases $C_{\spl}(\Sigma)$ is non--empty, so the result holds.

\p{Induction on $\vccat$}  Let $\Sigma = (S, P, \fB_+, \fB_-)$ be a vertex complement such that the proposition holds for all $\Tau < \Sigma$.  Suppose without loss of generality that $|\fB_+| \geq |\fB_-|$.  Let $C_{p,q}$, $\bE_{p,q}^{*, \leftarrow}$ and $\bE_{p,q}^{*, \downarrow}$ be as discussed in the beginning of Section~\ref{splitsubsection}.  It suffices to prove that the downward sequence converges to $\Z$ for $p + q = 0$ and converges to $0$ for $0 < p + q < \dim(\cC_{\spl}(\Sigma))$, and that the leftward sequence converges to $\HH_{p+q}(\cC_{\spl}(\Sigma))$ for $0 < p + q< \dim(\cC_{\spl}(\Sigma))$. 

\p{The downward spectral sequence converges to $0$ for $0 < p + q < \dim(\cC_{\spl}(\Sigma))$}  On page 1, the downward spectral sequence $\bE_{*,*}^{*,\downarrow}$ is given by
\begin{displaymath}
\bE_{p,q}^{1, \downarrow} = \bigoplus_{\sigma \in \cA^{(p)}}\HH_q(\biL_C(\sigma)).
\end{displaymath}
\bn If $|\fB_+| = 2$ and $\sigma$ is a vertex, then $\biL_{\cC}(\sigma)$ is contractible.  For all other situations, the inductive hypothesis says that the groups $\tHH_q(\biL_{\cC}(\sigma))$ are trivial for $\sigma$ a $p$-cell and $0 \leq q < \dim(\cC_{\spl}(\Sigma)) - p$.  Therefore we have
\begin{displaymath}
\bE_{p,q}^{*, \downarrow} \Rightarrow \HH_{p + q}(\cA_{\nosep}(\Sigma, Q))
\end{displaymath}
\bn for $ p + q < \dim(\cC_{\spl}(\Sigma))$.  But $g  - 3 + |\fB_+| \geq \dim(\cC_{\spl}(\Sigma)) - 1$, so by Lemma \ref{harerlemma}, $\bE_{p,q}^{*,\downarrow} \Rightarrow 0$ for $0 < p + q < \dim(\cC_{\spl}(\Sigma))$ and $\Z$ for $p + q= 0$.  

\p{The leftward spectral sequence converges to $\HH_{p+q}(\cC_{\spl}(\Sigma))$}  This is the content of Lemma~\ref{downconvlemma}.

\bn Since both $\bE_{p,q}^{1,\leftarrow}$ and $\bE_{p,q}^{1,\downarrow}$ converge to the total homology of $C_{p,q}$, we have $\tHH_{p+q}(\cC_{\spl}(\Sigma)) = 0$ for $p + q < \dim(\cC_{\spl}(\Sigma))$, as desired.
\end{proof} 

\section{PL--Morse theory for cell complexes}\label{plmorseconvexsection}

Recall from the introduction that the proof of Theorem~\ref{homolcyclethm} proceeds by using PL--Morse theory on the complex of minimizing cycles $\cB_{\vec{x}}(S_g)$~\cite{BBM}.  A reference for PL--Morse theory in general can be found in the work of Bestvina \cite{BestvinaMorse}. The complex $\cB_{\vec{x}}(S_g)$ is not a simplicial complex, so we must explain how to use PL--Morse theory on certain types of non--simplicial complexes.  The main goal is to prove Lemma~\ref{plconvexapp}, which describes a method for proving that a complex is highly acyclic.

\p{Convex hulls}  Let $\R^{\N} = \bigoplus_{n \in \N} \R$.  Let $S_1, S_2 \subseteq \R^{\N}$ be two subsets.  The convex hull of $S_1$, denoted $\Hull(S_1)$, is the set
\begin{displaymath}
\{t_0s_0 + \ldots + t_ns_n: n \in \Z_{\geq 0}, s_0,\ldots, s_n \in S, \sum_{i=0}^n t_i = 1, t_i \geq 0 \text{ for all } 0 \leq i \leq n\}.
\end{displaymath}
\bn  The convex join of $S_1$ and $S_2$, denoted $\ConvJoin(S_1,S_2)$, is the set
\begin{displaymath}
\{t_1s_1 + t_2s_2: s_i \in S_i, t_1 + t_2 = 1, t_i \geq 0\}.
\end{displaymath}

\p{Locally linear cell complex} A finite dimensional cell complex $X$ is \emph{locally linear} if there is an inclusion $\iota: X \hookrightarrow \R^{\N}$ such that each cell $\sigma$ of $X$ is the convex hull of its vertices and $\partial \sigma$ is the union of the faces of $\iota(\sigma)$.  We will conflate $X$ with its image under this map.  Let $W:X^{(0)} \rightarrow \N$ be a PL--Morse funtion.  We say that $W$ is a \emph{linear PL--Morse function} if $W$ is the restriction of some linear function $\R^{\N} \rightarrow \R$, which by abuse of notation we will refer to as $W$ as well.  Linear PL--Morse functions have the following property.

\begin{lemma}\label{convexfacefact}
Let $\sigma$ be a cell of a locally linear cell complex $X$ and let $W$ be a linear PL--Morse function on $X$.  The set of all points $x \in \sigma$ with $W(x)$ of maximal weight among the points of $\sigma$ is a face of $\sigma$, and is the convex hull of the vertices of $\sigma$ of maximal weight.
\end{lemma}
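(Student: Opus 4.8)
The plan is to work inside a single cell $\sigma$, using that it is the convex hull of its vertices $v_0, \dots, v_k$, and that $W$ restricted to $\sigma$ is the restriction of an affine-linear function on $\R^{\N}$. First I would let $m = \max_{0 \le i \le k} W(v_i)$ and let $F$ be the set of vertices of $\sigma$ attaining this maximum; the claim is that $\Hull(F)$ is exactly $\{x \in \sigma : W(x) = m\}$ and that this is a face of $\sigma$.

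The key step is the linearity argument. Any $x \in \sigma$ can be written $x = \sum_i t_i v_i$ with $t_i \ge 0$ and $\sum_i t_i = 1$, so by linearity $W(x) = \sum_i t_i W(v_i) \le \sum_i t_i m = m$, with equality if and only if $t_i = 0$ for every $i$ with $W(v_i) < m$ — that is, if and only if $x \in \Hull(F)$. This simultaneously shows $W$ attains its max $m$ on $\sigma$ precisely on $\Hull(F)$ and that $\Hull(F)$ is the convex hull of the maximum-weight vertices, giving the second assertion.

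It then remains to see that $\Hull(F)$ is a \emph{face} of $\sigma$, i.e. one of the subcomplexes occurring in $\partial\sigma$ (or all of $\sigma$ if $F$ is the full vertex set). For this I would invoke the locally linear structure: $\sigma$ is a cell whose boundary is the union of the faces of its image in $\R^{\N}$, and the faces of a convex polytope that is the hull of a vertex set are exactly the convex hulls of those subsets of vertices cut out by a supporting hyperplane. Since $W$ is affine-linear and bounded above by $m$ on $\sigma$, the hyperplane $\{W = m\}$ is a supporting hyperplane of $\sigma$, so $\sigma \cap \{W = m\} = \Hull(F)$ is a face of the polytope $\sigma$, hence a face of the cell $\sigma$ in $X$.

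The only real subtlety — the step I expect to need the most care — is matching the combinatorial notion of ``face of the cell $\sigma$'' in the CW/locally linear complex $X$ with the polytopal notion of ``face of the convex polytope $\iota(\sigma)$.'' This is exactly what the definition of locally linear was set up to guarantee ($\partial\sigma$ is the union of the faces of $\iota(\sigma)$, and each cell is the convex hull of its vertices), so once that identification is spelled out the argument is just the elementary convexity computation above. I would write the proof as: (1) affine-linearity gives $W(x) = \sum t_i W(v_i)$; (2) deduce the max is $m$, attained exactly on $\Hull(F)$; (3) observe $\{W = m\}$ supports $\sigma$ and conclude $\Hull(F)$ is a face.
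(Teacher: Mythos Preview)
Your proposal is correct; the paper actually states this lemma without proof, treating it as a standard fact about linear functionals on convex polytopes. Your argument --- write $x$ as a convex combination of vertices, use linearity of $W$ to see the maximum is attained exactly on $\Hull(F)$, then recognize $\{W=m\}$ as a supporting hyperplane so that $\Hull(F)$ is a polytopal face, which by the locally linear hypothesis is a face of the cell --- is exactly the elementary justification one would give.
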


\begin{proof}
Let $M \subseteq \sigma$ denote the set of points of maximal weight.  By definition this is a face of $\sigma$ since $M$ is the set of points maximizing the linear function $W$.  Then $M$ must be the convex hull of its vertices as well, so the result follows.
\end{proof}

\p{Sharp PL--Morse functions} Let $X$ be a locally linear cell complex.  We say that a function $W:X^{(0)} \rightarrow \bN$ is \emph{sharp} if, for every cell $\rho \subseteq X$ where $W|_{\rho^0} \neq 0$, the function $W|_{\rho^0}$ has a unique maximum.

\p{Quasi--linear PL--Morse functions} We say that a function $W:X^0\rightarrow \N$ is \emph{quasi--linear} if $W$ is either linear or sharp, and we will refer to these as PL--Morse functions.  We say that a cell $\sigma \subseteq X$ is \emph{$W$--constant} if the following hold:
\begin{itemize}
\item $W(v) = W(w)$ for every $v,w \in \sigma^{(0)}$; and
\item $W(v) > 0$ for all $v \in \sigma^0$.
\end{itemize}

\p{Descending links in locally linear cell complexes}  Let $X$ be a locally linear cell complex and let $W$ be a quasi--linear PL--Morse function on $X$.  Let $\sigma \subseteq X$ be a $W$--constant cell.  We will define three different versions of the descending link $d_W(\sigma)$, and then show that all are homotopy equivalent.  Let $\calR_{\sigma}$ denote the set of all cells $\rho \subseteq X$ such that
\begin{itemize}
\item $\sigma$ is a face of $\rho$, and
\item $W(r) \leq W(\sigma)$ for all $r \in \rho^{(0)}$, with equality if and only if $r \in \sigma$.  
\end{itemize}
\bn The variants of the descending link are as follows.

\begin{itemize}
\item \emph{The facial descending link $d_W^{\face}(\sigma)$.} Let $\rho \in \calR_\sigma$ be a cell and let $\calT(\rho)$ be the set of all faces $\tau \subseteq \rho$ such that $\tau \cap \sigma = \emptyset$.  We define
\begin{displaymath}
d_W^{\face}(\sigma) = \bigcup_{\rho \in \calR_\sigma} \calT(\rho).
\end{displaymath}
\item \emph{The adjacent descending link $d_W^{\adj}(\sigma)$.}  For all $\rho \in \calR_\sigma$, define $V(\rho)$ to be the set of all vertices $v \in \rho^{(0)}$ with $v \not \in \sigma$ such that $v*w$ is an edge of $X$ for some $w \in \sigma$.  We define
\begin{displaymath}
d_{W}^{\adj}(\sigma) = \bigcup_{\rho \in \calR_\sigma} \Hull(V(\rho)).
\end{displaymath}
\item \emph{The total descending link $d_W^{\tot}(\sigma)$.} This is given by
\begin{displaymath}
d_{W}^{\tot}(\sigma) = \ConvJoin(d_W^{\face}(\sigma), d_W^{\adj}(\sigma)).
\end{displaymath}
\end{itemize}
\bn Note that there are two inclusions
\begin{displaymath}
d_{W}^{\face}(\sigma) \hookrightarrow d_W^{\tot}(\sigma) \hookleftarrow d_W^{\adj}(\sigma).
\end{displaymath}
\bn We first prove the following.
\begin{lemma}\label{dlcontractlemma}
Let $W$ be a quasi--linear PL--Morse function on a locally linear cell complex $X$.  Let $\sigma \subseteq X$ be a $W$--constant cell.  Let $\rho \in \calR_{\sigma}$ be a cell.  Then $d_W^{\dagger}(\sigma) \cap \rho$ is contractible for any choice of $\dagger = \face, \tot, \adj$.  
\end{lemma}

\begin{proof}
We prove this for each choice of $\dagger$ in turn.

\p{$\dagger = \adj$}  We will show that $d_W^{\adj}(\sigma) \cap \rho = \Hull(V(\rho))$, where $V(\rho)$ is as in the definition of $d_W^{\face}$.  
The only potentially non--obvious point here is that if $\rho$ is a face of another cell $\rho' \subseteq \calR_{\sigma}$, then it may be the case that $\Hull(V(\rho')) \cap \rho \supsetneq \Hull(V(\rho))$.  Suppose in fact that this does happen, so there is a point $x \in \Hull(V(\rho')) \cap \rho \cut \Hull(V(\rho))$.  Then $x$ is on a line connecting two points $t_1,t_2 \in V(\rho')$.  These $t_1$ and $t_2$ cannot lie $\rho$, since otherwise $x \in \Hull(V(\rho))$.  But no such $t_1$ and $t_2$ can exist since $\rho$ is a face of $\rho'$, so we have $\Hull(V(\rho)) = d_W^{\adj}(\sigma) \cap \rho$.  

\p{$\dagger = \tot$} This follows by a similar line of reasoning to $\dagger = \adj$.

\p{$\dagger = \face$}  As for $\adj$, define $\calT(\rho)$ as in the definition of $d_W^{\face}(\sigma)$.  Then $\calT(\rho)$ is given by the union of faces of $\rho$.  If $x \in \rho$ were a point in the interior of $\rho$ that was contained in $\calT(\rho')$ for some other cell $\rho'$, then $\rho$ would be a face of $\rho'$.  But then $\rho \subseteq \calT(\rho')$ implies that $\sigma \cap \rho = \emptyset$ by definition of $\calT$, which contradicts the definition of $\calR_{\rho}$.  
\end{proof}

\bn We now have the following lemma that describes the relationships between the three types of descending links.
\begin{lemma}\label{dlvarlemma}
Let $W$ be a quasi--linear PL--Morse function on a locally linear cell complex $X$.  Let $\sigma \subseteq X$ be a $W$--constant cell.  Then each inclusion
\begin{displaymath}
d_{W}^{\face}(\sigma) \hookrightarrow d_W^{\tot}(\sigma) \hookleftarrow d_W^{\adj}(\sigma)
\end{displaymath}
\bn is a weak homotopy equivalence.
\end{lemma}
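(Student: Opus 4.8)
The plan is to exhibit, over the poset $\calR_\sigma$, three \emph{compatible} covers --- one of each of $d_W^{\face}(\sigma)$, $d_W^{\adj}(\sigma)$, $d_W^{\tot}(\sigma)$ --- all of whose pieces and finite intersections are contractible (or empty), and then to read off the homology isomorphisms from the associated Mayer--Vietoris (\v{C}ech) spectral sequences, in the style of the proof of Proposition~\ref{mvdkprop}. Throughout one may discard the element $\rho=\sigma$ of $\calR_\sigma$, whose contribution to each cover is empty, and it suffices to index by the maximal cells of $\calR_\sigma$.

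The first step is the local analysis of a single $\rho\in\calR_\sigma$ with $\rho\neq\sigma$. By the definition of $\calR_\sigma$ the vertices of $\rho$ of maximal $W$-weight are exactly those of $\sigma$, so $\sigma$ is a proper face of $\rho$; consequently $\calT(\rho)$, the complex of faces of the polytope $\rho$ disjoint from $\sigma$, is contractible (a standard fact about polytopes, which one can prove by a shelling argument or by a radial deformation retraction from a point of $\mathrm{relint}(\sigma)$), $\Hull(V(\rho))$ is a convex polytope hence contractible, and therefore $\ConvJoin(\calT(\rho),\Hull(V(\rho)))$ is contractible as well. Next, for $\rho_1,\rho_2\in\calR_\sigma$ the intersection $\rho_1\cap\rho_2$ is again a cell lying in $\calR_\sigma$, and one checks the identities $\calT(\rho_1)\cap\calT(\rho_2)=\calT(\rho_1\cap\rho_2)$ and $\Hull(V(\rho_1))\cap\Hull(V(\rho_2))=\Hull(V(\rho_1\cap\rho_2))$, together with the analogous identity for the convex joins; the key point for the second identity is that $V(\rho_i)\cap(\rho_1\cap\rho_2)$ equals $V(\rho_1\cap\rho_2)$ for \emph{both} $i$, so no ``diagonal'' ambiguity arises in the intersection of the two convex hulls. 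Iterating, every finite intersection of pieces within a single cover is again a piece of the same shape, hence contractible or empty, and moreover the three covers have the same nerve $N$, namely the simplicial complex whose simplices are the finite subsets $\{\rho_1,\dots,\rho_k\}$ with $\rho_1\cap\cdots\cap\rho_k\supsetneq\sigma$.

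The second step is to feed these covers into the \v{C}ech--to--singular spectral sequence. The inclusions $d_W^{\face}(\sigma)\hookrightarrow d_W^{\tot}(\sigma)\hookleftarrow d_W^{\adj}(\sigma)$ carry the $\rho$-indexed piece of one cover into the $\rho$-indexed piece of the other, so they are morphisms of covered spaces inducing the identity on index sets and on all intersection data, in particular the identity on the common nerve $N$. Since every multi-intersection is contractible or empty, each spectral sequence degenerates to $\widetilde H_*(N)$, and by naturality of the spectral sequence the resulting isomorphisms $\widetilde H_*(d_W^{\face}(\sigma))\cong\widetilde H_*(d_W^{\tot}(\sigma))\cong\widetilde H_*(d_W^{\adj}(\sigma))$ are precisely those induced by the two inclusions, which is the assertion of the lemma.

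The step I expect to be the main obstacle is showing that $\{\ConvJoin(\calT(\rho),\Hull(V(\rho)))\}_{\rho\in\calR_\sigma}$ genuinely covers $d_W^{\tot}(\sigma)=\ConvJoin(d_W^{\face}(\sigma),d_W^{\adj}(\sigma))$ with the stated intersection pattern: a priori, distributing $\ConvJoin$ over the two unions produces ``mixed'' pieces $\ConvJoin(\calT(\rho_1),\Hull(V(\rho_2)))$ with $\rho_1\neq\rho_2$, which need not lie in any single cell of $X$, so one must use the local linearity of $X$ to argue that these mixed pieces are redundant --- equivalently, that every point of $d_W^{\tot}(\sigma)$ lies in $\ConvJoin(\calT(\rho),\Hull(V(\rho)))$ for a single cell $\rho$. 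Should this fail in full generality, the fallback is either to work with the refined cover of $d_W^{\tot}(\sigma)$ by all pieces $\ConvJoin(\calT(\rho_1),\Hull(V(\rho_2)))$ and verify that its nerve still has the homology of $N$, or to bypass covering $d_W^{\tot}(\sigma)$ altogether by producing explicit deformation retractions of $d_W^{\tot}(\sigma)$ onto $d_W^{\face}(\sigma)$ and onto $d_W^{\adj}(\sigma)$, assembled cell by cell from the straight-line homotopies available inside each convex cell $\rho$.
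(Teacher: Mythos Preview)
Your approach is essentially the paper's: cover each of the three descending links by pieces indexed by the maximal cells of $\calR_\sigma$, check that all pieces and finite intersections are contractible or empty, and conclude via the nerve lemma that all three homologies agree with that of the common nerve. The paper writes its cover pieces uniformly as $U_\rho^\dagger=\rho\cap d_W^\dagger(\sigma)$ rather than via your explicit formulas $\calT(\rho)$, $\Hull(V(\rho))$, $\ConvJoin(\calT(\rho),\Hull(V(\rho)))$, which makes the intersection identities $U_{\rho_1}^\dagger\cap\cdots\cap U_{\rho_k}^\dagger=U_{\rho_1\cap\cdots\cap\rho_k}^\dagger$ automatic; it is comparably terse on the point you flag about mixed pieces in $d_W^{\tot}(\sigma)$, so your caution there is well placed and your proposed fallback via cellwise straight-line retractions is a sound way to close it.
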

\begin{proof}
Let $\calR_\sigma$ be as above.  For each choice of $\dagger = \face, \tot, \adj$ and for each $\rho \in \calR^{\max}$ let
\begin{displaymath}
U_{\rho}^\dagger = \rho \cap d_W^\dagger(\sigma).
\end{displaymath}
Let $\calR^{\max}_\sigma$ denote the set of maximal cells in $\calR_\sigma$, i.e., the set of all $\rho$ such that there is no $\rho' \in \calR_{\sigma}$ with $\rho \subsetneq \rho'$.  Let $\cN(U_{\rho}^{\dagger})$ denote the nerve of the set $\{U_{\rho}^{\dagger}\}_{\rho \in \calR_{\sigma}^{\max}}$.  Each of these spaces and their non--empty intersections are contractible by Lemma~\ref{dlcontractlemma}. The nerve lemma implies that we have isomorphisms of homology groups
\begin{displaymath}
\HH_*(\cN(U_{\rho}^{\dagger})) \cong \HH_*(d_W^{\dagger}(\sigma)).
\end{displaymath}
\bn It now remains to show that for a choice of $\dagger = \adj, \face$, the induced map of chain complexes
\begin{displaymath}
C_*(\cN(U_{\rho}^{\dagger});\Z) \rightarrow C_*(\cN(U_{\rho}^{\tot});\Z)
\end{displaymath}
\bn is a homotopy equivalence.  In fact, we will show that the above map induces an isomorphism of chain complexes.  It is clear that the induced map is a well--defined injection for either choice of $\dagger \in \{\face, \adj\}$, so it remains to prove that this induced map is a surjection.

 Let $\rho_0,\ldots,\rho_n \in \calR_{\sigma}^{\max}$ be a collection of cells with $U_{\rho_0}^{\tot} \cap \ldots \cap U_{\rho_n}^{\tot} \neq \emptyset$.  Let $\rho = \rho_0 \cap \ldots \cap \rho_n$, which is necessarily a cell of $X$.  Since $\rho$ is a cell of $X$ and $\sigma \subsetneq \rho$, there is a cell $\tau \subseteq \rho$ with $\tau \cap \sigma =\emptyset$ and $\tau \subseteq U_{\rho}^{\tot}$.  We can choose $\tau$ to contain a vertex adjacent to $\sigma$, since $\rho$ is a cell.  Therefore $U_{\rho}^{\adj}$ and $U_{\rho}^{\face}$ are both nonempty and hence contractible by Lemma \ref{dlcontractlemma}, so the induced map of chain complexes is surjective, and hence an isomorphism.
\end{proof}

\bn  We will say that a quasi--linear PL--Morse function $W$ on a locally linear complex $X$ is \emph{$n$--acyclic} if $d_W^{\tot}(\sigma)$ is $(n-\dim(\sigma))$--acyclic for every $W$--constant cell $\sigma \subseteq X$.  By Lemma~\ref{dlvarlemma}, this is equivalent to saying that $d_W^{\face}(\sigma)$ or $d_W^{\adj}(\sigma)$ are $(n-\dim(\sigma))$--acyclic.  Let $\sigma$ be a $W$--constant cell, and let $d_W^{\face, \sus}(\sigma)$ denote the union of all cells $\tau$ such that:
\begin{itemize}
\item $\tau \subseteq \rho$ for some $\rho \in \calR_{\sigma}^{\max}$, and 
\item $\tau \cap \sigma \subsetneq \sigma$.
\end{itemize} This is called the \emph{suspended facial descending link}.  This terminology is motivated by the following result, which says that $d_W^{\face, \sus}(\sigma)$ has the connectivity of the $\dim(\sigma)$--fold suspension of $d_W^{\face}(\sigma)$.

\begin{lemma}\label{dlgluinglemma}
Let $X$ be a locally linear complex and $W$ a quasi--linear PL--Morse function on $X$.  Suppose that $W$ is $n$--acyclic.  Then for every $W$--constant $k$--cell $\sigma \subseteq X$, the complex $d_W^{\face, \sus}(\sigma)$ is $n$--acyclic.
\end{lemma}

\begin{proof}
If $W$ is sharp, then $\dim(\sigma) = 0$, so this follows by definition.  Therefore, we may assume that $W$ is linear.  As in the proof of Lemma \ref{dlvarlemma}, recall the set $\calR_{\sigma}^{\max}$.  For each $\rho \in \calR_{\sigma}^{\max}$, let $U_{\rho} = d_W^{\face, \sus}(\sigma) \cap \rho$ and let $U'_{\rho} = U_{\rho} \cap d_W^{\face}(\sigma)$.  From Lemma~\ref{dlcontractlemma}, we see that for any $\rho_1,\ldots, \rho_n \in \calR_{\sigma}^{\max}$ with $U'_{\rho_1} \cap \ldots \cap U'_{\rho_n}$ non--empty, then $U'_{\rho_1} \cap \ldots \cap U'_{\rho_n} \simeq *$.  Furthermore, if $U'_{\rho_1} \cap \ldots \cap U'_{\rho_n} \neq \emptyset$, then $$U_{\rho_1} \cap \ldots \cap U_{\rho_n} \simeq U'_{\rho_1} \cap \ldots \cap U'_{\rho_n}$$ via a straight--line homotopy.  On the other hand, if $U'_{\rho_1} \cap \ldots \cap U'_{\rho_n} = \emptyset$, then $$U_{\rho_1} \cap \ldots \cap U_{\rho_n} = \partial \sigma.$$  Now, consider the topological space $\partial \sigma * d_W(\sigma)$.  Note that this is no longer a locally linear cell complex.  For each $\rho \in \calR_{\sigma}^{\max}$, let $V_{\rho}$ denote the topological space $U'_{\rho} * \partial \sigma$, and let $\calV = \{V_{\rho}: \rho \in \calR_{\sigma}^{\max}\}$.  Finally, let $X_{\rho} = \ConvJoin(\partial \sigma, U'_{\rho})$, and let $\calX = \{X_{\rho}\}_{\rho \in \calR_{\sigma}^{\max}}.$ Note that for each $\rho \in \calR_{\sigma}$, we have an inclusion $U_{\rho} \hookrightarrow X_{\rho}$ and a map $V_{\rho} \rightarrow X_{\rho}$.  Therefore we have maps of nerves $$\calN(\calU) \rightarrow \calN(\calX) \leftarrow \calN(\calV).$$ These maps are both homotopy equivalences, since for any $\rho_1,\ldots, \rho_n$, we have either $$U_{\rho_1} \cap \ldots \cap U_{\rho_n} \simeq V_{\rho_1} \cap \ldots \cap V_{\rho_n} \simeq X_{\rho_1} \cap \ldots \cap X_{\rho_n} \simeq *$$ or $$U_{\rho_1} \cap \ldots \cap U_{\rho_n} \simeq V_{\rho_1} \cap \ldots \cap V_{\rho_n} \simeq X_{\rho_1} \cap \ldots \cap X_{\rho_n} = \partial \sigma.$$ Therefore $d_W^{\face, \sus}(\sigma) \simeq \partial \sigma * d_W^{\face}(\sigma)$.  Then $\sigma$ is a compact convex set, so $\partial \sigma$ is homotopy equivalent to a sphere of dimension $\dim(\sigma) - 1$, so the lemma follows by Lemma~\ref{joinfact}. 
\end{proof}

\bn We now have the following lemma.

\begin{lemma}\label{plconvexfundamentallinear}
Let $X$ be a finite dimensional locally linear cell complex and let $W$ be a quasi--linear PL--Morse function.  Suppose that $W$ is $n$--acyclic.  Then the pair $(X, M(X))$ is $(n+1)$--acyclic.
\end{lemma}
\begin{proof}
Let
\begin{displaymath}
X_{m,k}\coloneqq W^{-1}([0,m)) \bigcup\{\sigma \text{ a cell of } X: \max\{W|_{\sigma}\} \leq m, \dim(\sigma \cap W^{-1}(m)) \leq k\}.
\end{displaymath}
\bn If $m > 0$, then $X_{m,k}$ is built out of $X_{m-1,k}$ iteratively by attaching, for each $W$--constant $k$--cell $\sigma$ with $W(\sigma) = m$, the space $s_{W}(\sigma)$ over $d_W^{\face, \sus}(\sigma)$.  This is given by attaching a contractible space over an $n$--acyclic space by Lemma~\ref{dlgluinglemma}, so $\HH_*(X_{m,k};\Z) \cong \HH_*(X_{m-1,k};\Z)$ for $* \leq n + 1$.
\end{proof}

\bn We now have the following lemma.

\begin{lemma}\label{plconvexapp}
Let $X$ be an $(n+1)$--acyclic, finite dimensional, locally linear cell complex and let $W$ be an $n$--acyclic linear PL--Morse function on $X$.  Assume that $W$ is quasi--linear.  Then $M(W)$ is $n$--acyclic.  
\end{lemma}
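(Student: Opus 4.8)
The plan is to deduce Lemma~\ref{plconvexapp} formally from the long exact sequence in homology of the pair $(X, M(W))$, in exact analogy with the way Lemma~\ref{plmorsesubtofullcorr}(d) is obtained from Lemma~\ref{plfundamental}. The two inputs are the relative acyclicity of $(X, M(W))$ and the absolute acyclicity of $X$, and both are already available in the excerpt.

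First I would establish that $(X, M(W))$ is relatively $(n+1)$--acyclic, i.e.\ $H_k(X, M(W);\Z) = 0$ for all $k \leq n+1$. This splits into the two cases allowed by the hypothesis on $W$: if $W$ is linear this is Lemma~\ref{plconvexfundamentallinear}, and if $W$ is sharp (and quasi--linear) this is Lemma~\ref{plconvexfundamentalsharp}. In particular, since $H_0(X, M(W);\Z)$ must then vanish while $X$ is nonempty, this already forces $M(W)$ to be nonempty.

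Next I would invoke the long exact sequence
\begin{displaymath}
\ldots \rightarrow H_{i+1}(X, M(W);\Z) \rightarrow H_i(M(W);\Z) \rightarrow H_i(X;\Z) \rightarrow H_i(X, M(W);\Z) \rightarrow \ldots
\end{displaymath}
For $1 \leq i \leq n$, the left term vanishes because $i+1 \leq n+1$ and the pair is relatively $(n+1)$--acyclic, while $H_i(X;\Z) = \widetilde{H}_i(X;\Z)$ vanishes because $X$ is $(n+1)$--acyclic, hence in particular $n$--acyclic; exactness then gives $H_i(M(W);\Z) = 0$. For $i = 0$, the segment $H_1(X, M(W);\Z) \rightarrow H_0(M(W);\Z) \rightarrow H_0(X;\Z) \rightarrow H_0(X, M(W);\Z)$ has vanishing outer terms, so $H_0(M(W);\Z) \cong H_0(X;\Z) \cong \Z$; thus $M(W)$ is connected and $\widetilde{H}_0(M(W);\Z) = 0$. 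Together these show $\widetilde{H}_i(M(W);\Z) = 0$ for all $i \leq n$, which is precisely the assertion that $M(W)$ is $n$--acyclic.

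Since the last step is purely homological algebra, I do not anticipate a real obstacle there; the only point needing mild care is the degree--zero end of the sequence, where one must combine the vanishing of both $H_0(X, M(W);\Z)$ and $H_1(X, M(W);\Z)$ to conclude that $M(W)$ is connected and nonempty, rather than merely that its higher reduced homology vanishes. The substance of the lemma is carried entirely by Lemmas~\ref{plconvexfundamentallinear} and~\ref{plconvexfundamentalsharp}, and thence by Lemmas~\ref{dlvarlemma} and~\ref{dlgluinglemma}, which is why the proof of this statement itself is short.
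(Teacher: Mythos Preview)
Your proposal is correct and follows essentially the same approach as the paper: both invoke Lemmas~\ref{plconvexfundamentallinear} and~\ref{plconvexfundamentalsharp} to get relative $(n+1)$--acyclicity of $(X,M(W))$, then read off $n$--acyclicity of $M(W)$ from the long exact sequence of the pair together with the $(n+1)$--acyclicity of $X$. Your treatment of the degree--zero case is slightly more explicit than the paper's, but otherwise the arguments coincide.
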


\begin{proof}
By the long exact sequence in relative homology for the pair $(X, M(W))$ and the fact that $\HH_k(X;\Z) = 0$ for $k \leq n +1$, the connecting homomorphism
\begin{displaymath}
\HH_{k+1}(X, M(W);\Z) \rightarrow \HH_k(M(W);\Z)
\end{displaymath}
\bn is an isomorphism for $0 \leq k \leq n$.  Hence Lemma~\ref{plconvexfundamentallinear} completes the proof.
\end{proof}

\section{The Complex of homologous curves}\label{homolcurvesection}

We now move on to the study of the complex of homologous curves $\cC_{\vec{x}}(S_g)$.  Our goal is to prove our main theorem (Theorem~\ref{homolcyclethm}), which says that $C_{\vec{x}}(S_g)$ is $(g-3)$--acyclic for $g \geq 2$.  

\begin{figure}[ht]
\includegraphics[scale = 0.6]{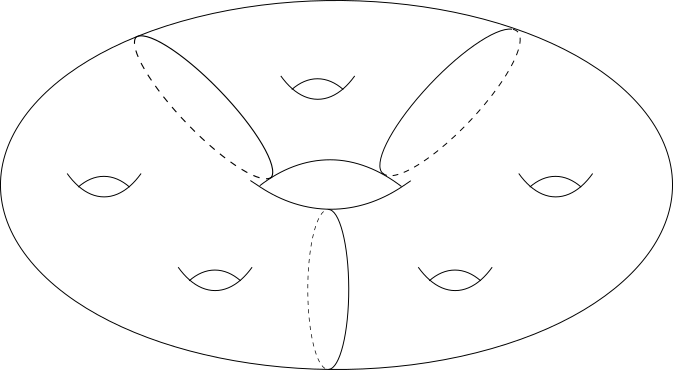}
\caption{A 2-cell in $\cC_{\vec{x}}(S_g)$.}
\label{homolcycleex}
\end{figure}

\p{The outline of the proof of Theorem~\ref{homolcyclethm}}  In Section~\ref{complexcycledefnsection}, we discuss the complex of minimizing cycles~\cite{BBM}.  We also discuss a linear PL-Morse function $W$ (in the sense of Section~\ref{plmorseconvexsection}) on the complex of minimizing cycles defined by Hatcher and Margalit~\cite{HatcherMargalithomologous}.  The min--set of $W$ will be $\cC_{\vec{x}}(S_g)$.  In Section~\ref{drainsubsection}, we show that an auxiliary complex called the complex of draining cycles is homologically spherical.  Instances of this complex will turn out to be the descending links of the PL--Morse function $W$.  We complete the proof of Theorem~\ref{homolcyclethm} in Section~\ref{homolcyclecompletesubsec} using the PL--Morse function $W$ and the results of Section~\ref{plmorseconvexsection}.

\subsection{The complex of minimizing cycles}\label{complexcycledefnsection}  We first define the complex of minimizing cycles, and then discuss Hatcher and Margalit's PL--Morse function on the complex of minimizing cycles.  Let $S = S_g$ and let $\vec{x}$ a primitive non-zero homology class in $\HH_1(S;\Z)$.  A \emph{basic cycle} for $\vec{x}$ is an oriented multicurve $M = a_1 \sqcup \ldots \sqcup a_k$ such that there is a unique collection of positive integers $\lambda_1,\ldots,\lambda_k$ with
\begin{displaymath}
\vec{x} = \sum_{i = 1}^k \lambda_i [a_i].
\end{displaymath}
\bn We will say that a multicurve $M = a_1 \sqcup\ldots \sqcup a_m$ is a \emph{cycle} if
\begin{enumerate}
\item each $a_i$ is a member of a basic cycle $M' \subseteq M$, and
\item  any non--trivial linear combination of the $[a_i]$ with nonnegative $\Z$--coefficients is nonzero.
\end{enumerate}
\p{Remark} Condition (2) is not present in Bestvina, Bux, and Margalit's original definition.  Gaifullin demonstrated that condition (2) needed to be included in the definition~\cite{Gaifullin}.  All of Bestvina, Bux and Margalit's~\cite{BBM} results still hold, since they implicitly assumed that condition (2) followed from (1).

\medskip
\bn Let $\mathfrak{S}$ be the set of isotopy classes of nonseparating simple closed curves in $S$ and let $\R^{\mathfrak{S}}$ be the real vector space consisting of finite linear combinations of elements of $\mathfrak{S}$.  If $M$ is a cycle, denote by $P_M \subseteq \R^{\mathfrak{S}}$ the convex hull of the basic cycles in $M$.  Let $\mathcal{M}$ denote the set of all cycles in $S_g$.  The complex of minimizing cycles $\cB_{\vec{x}}(S_g)$ is the union
\begin{displaymath}
\bigcup_{M \in \calM} P_M.
\end{displaymath}
\bn Observe that $\cB_{\vec{x}}(S_g)$ is a CW-complex where the $k$--cells correspond to cycles $M \in \calM$ with $\dim(P_M) = k$.  Bestvina, Bux and Margalit proved that $\cB_{\vec{x}}(S)$ is contractible~\cite{BBM}.  We will prove Theorem~\ref{homolcyclethm} by constructing a linear PL-Morse function $W$ on $\cB_{\vec{x}}(S_g)$ that is $(g-3)$--acyclic and has min-set equal to $\cC_{\vec{x}}(S)$. 

\p{The PL-Morse function}  We will use the PL-Morse function on $\cB_{\vec{x}}(S_g)$ originally defined by Hatcher and Margalit~\cite{HatcherMargalithomologous}.  Let $v$ be a vertex in $\cB_{\vec{x}}(S)$ represented by a basic cycle $M = a_0 \sqcup \ldots \sqcup a_m$.  By definition, $\vec{x} = \sum_{i \leq m} \lambda_i [a_i]$ for some unique positive integers $\lambda_i$.  We define
\begin{displaymath}
W(v) = \sum_{i \leq m} \lambda_i.
\end{displaymath}

\p{The min-set $M(W)$} There are no vertices $v \in \cB_{\vec{x}}(S_g)$ with $W(v) = 0$.  In fact, the lowest weight vertices are given by all $v$ with $W(v) = 1$.  The subcomplex of $\cB_{\vec{x}}(S)$ generated by $v$ with $W(v) = 1$ is precisely the complex of homologous curves $\cC_{\vec{x}}(S)$, so by we will let $M(W)$ denote the subcomplex of $\cB_{\vec{x}}(S)$ generated by vertices $v$ with $W(v) = 1$.

\subsection{The complex of draining cycles}\label{drainsubsection}
Our goal in this section is to prove Lemma~\ref{decconnlemma}, which says that a certain complex called the complex of draining cycles is highly acyclic.  This complex is a generalization of the complex of splitting curves introduced in Section~\ref{splitsection}.  We will leverage Lemma~\ref{decconnlemma} to prove that the PL-Morse function $W$ is $(g-3)$--acyclic.

\p{The complex of draining cycles}  We begin by defining a certain class of labeled surfaces.  Let $S = S_g^b$.  We say that $S$ is a \emph{partial cobordism} if it comes equipped with an orientation of some of the boundary components of $S$ and a partition of the oriented boundary components into two disjoint sets $\fB_+$ and $\fB_-$ such that the surface $S'$ given by gluing a disc to each nonoriented boundary component of $S$ is a cobordism from $\fB_+$ to $\fB_-$.  We will denote a partial cobordism by $\Sigma  = (S, \fB_+, \fB_-, \fB_0)$, where $\fB_0 = \pi_0(\partial S) \setminus (\fB_+ \cup \fB_-)$.  We say that a partial cobordism is:
\begin{itemize}
\item \emph{draining} if $|\fB_+| > |\fB_-|$;
\item \emph{balanced} if $|\fB_+| = |\fB_-|$; and 
\item \emph{flooding} if $|\fB_+| < |\fB_-|$.
\end{itemize}
\bn Let $S = S_g^b$ and let $\Sigma = (S, \fB_+, \fB_-, \fB_0)$ be a partial cobordism.  A vertex of $\cC_{\dr}(\Sigma)$ is an oriented multicurve $M \subseteq S$ such that:
\begin{itemize}
\item $|\pi_0(S \cut M)| = 2$;
\item one connected component of $S \cut M$ is a partial cobordism $\Sigma_M$ from a subset of $\fB_+$ to a union of $M$ and a subset of $\fB_-$; and
\item the partial cobordism $\Sigma_M$ is draining.  
\end{itemize}
\bn Note that the partial cobordism $\Sigma_M$ is unique.  Two examples of vertices in this complex can be found in Figure~\ref{deccycleex}.
\begin{figure}[ht]
\begin{tikzpicture}
\node[anchor=south west, inner sep = 0] at (0,0){\includegraphics[scale = 0.8]{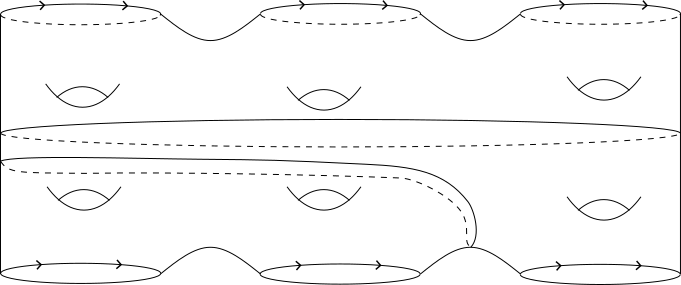}};
\node at (7.4,-0.5){\large $\fB_+$};
\node at (7.4,6.3){\large $\fB_-$};
\node at (10.2,2){\large $M$};
\node at (10.2,3.8){\large $N$};
\end{tikzpicture}
\caption{Two vertices $M$ and $N$ in the complex of draining cycles.}
\label{deccycleex}
\end{figure}
\bn We now define the higher-dimensional cells in the complex $\cC_{\dr}(\Sigma)$.  Let $\mathfrak{S}_{\Sigma}$ be set of oriented isotopy classes of essential simple closed curves on $\Sigma$.  We say that an oriented multicurve $M \subseteq S$ is \emph{representative} if: \begin{itemize}
\item each curve in $M$ is contained in a vertex $N$ of $\cC_{\dr}(\Sigma)$ with $N \subseteq M$;
\item if $N \subseteq M$ is a multicurve such that $\left| \pi_0(S \cut N)\right| = 2$ and at least one connected component $\Sigma_N$ of $S \cut N$ is a partial cobordism from a subset of $\fB_+$ to a union of $N$ and a subset of $\fB_-$, then $\Sigma_N$ is draining; and
\item any non--trivial linear combination of the homology classes represented by curves in $M$ with non--negative integer coefficients is nonzero in $\HH_1(S;\Z)$, where $S$ is the underlying surface of $\Sigma$.
\end{itemize}
\bn  Let $\mathfrak{S}_{\Sigma}$ denote the set of all oriented simple closed curves in $S$.  Let $\calM$ denote the set of representative multicurves in $\Sigma$.  Each vertex $M \in \calM$ corresponds to the point $a_1 + \ldots + a_k \in \R^{\mathcal{S}}$ with $M = a_1 \sqcup \ldots \sqcup a_k$.  We will henceforth identify a vertex $M \in \calM$ with this point in $\R^{\mathcal{S}}$.  For an arbitrary $M \in \calM$, let $P_M \subseteq \R^{\mathcal{S}_\Sigma}$ be the convex hull of the vertices supported in $M$.  The complex $\cC_{\dr}(\Sigma)$ is the union
\begin{displaymath}
\bigcup_{M \in \calM} P_M.
\end{displaymath}
\bn Observe that $\cC_{\dr}(\Sigma)$ is naturally a locally linear cell complex.  We will prove the following lemma.

\begin{lemma}\label{decconnlemma}
Let $\Sigma = (S_g^b, \fB_+, \fB_-, \fB_0)$ be a partial cobordism with $|\fB_+|, |\fB_-|\geq 2$ and $|\fB_+| \leq |\fB_-| +1$.  The complex $\cC_{\dr}(\Sigma)$ is $(g- 3+ |\fB_+|)$--acyclic.
\end{lemma}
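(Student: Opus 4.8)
The plan is to induct on the poset $\parcob$, adapting to partial cobordisms the bi-cellular spectral sequence argument used to prove Proposition~\ref{gensplitconnlemma}, which is essentially the special case of a vertex complement with $|B_+| = 2$. Two features must be accommodated that were absent there: the unoriented boundary components $B_0$, and the fact that $\cC_{\dr}(\Sigma)$ is only locally linear, since its vertices are multicurves. The second is handled by working throughout with cellular chain complexes --- the bi-cellular complex $C_{p,q} = \bigoplus_{\sigma} C_q\big(L_{\cC_{\dr}}(\sigma)\big)$ of Section~\ref{mvdksection} is defined verbatim when the target is locally linear --- and by invoking the results of Section~\ref{plmorseconvexsection} for the Morse-theoretic steps; if convenient one can first apply a sharp quasi-linear PL--Morse function (weight the number of components of a cycle) together with Lemma~\ref{plconvexapp} to replace $\cC_{\dr}(\Sigma)$ by its simplicial subcomplex of single-curve cycles without changing the acyclicity. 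For $B_0$, observe that when $|B_+|\le|B_-|$ no draining cycle is isotopic into a boundary component, so Proposition~\ref{BBPflow} shows that capping off a component of $B_0$ with a disk induces a homotopy equivalence of complexes of draining cycles; thus we may assume $B_0 = \emptyset$, i.e.\ that $\Sigma$ underlies a vertex complement.

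The base case is $|B_+| = 2$. There the draining inequality forces every vertex of $\cC_{\dr}(\Sigma)$ to be a single separating curve with all of $B_+$ on one side, and (since $|B_-|\ge 2$) every such curve automatically meets the nonnegative-combination condition in the definition of $\cC_{\dr}(\Sigma)$, so $\cC_{\dr}(\Sigma)$ is exactly the complex of splitting curves of the vertex complement $\Sigma$. Proposition~\ref{gensplitconnlemma} then gives $\mathfrak{a}(\cC_{\dr}(\Sigma)) \ge g - 3 + \mathbbm{1}_{|B_+|\ge 2} + \mathbbm{1}_{|B_-|\ge 2} = g-1 = g - 3 + |B_+|$; this is the only use of Proposition~\ref{gensplitconnlemma}.

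For the inductive step I would run the bi-cellular machinery with $Y = \cA_{\nosep}(\Sigma,Q)$, where $Q$ has exactly one interval on each component of $B_+$, so $|Q| = |B_+|$ and Lemma~\ref{noseparcconnlemma} makes $Y$ exactly $(g + |B_+| - 3)$-acyclic, matching the target. Cutting $\Sigma$ along a cell $\sigma$ of $Y$ produces a partial cobordism strictly lower in $\parcob$ whose complex of draining cycles is $L_{\cC_{\dr}}(\sigma)$ (or a join of such, when $\sigma$ extends to a cell of $\cC_{\dr}(\Sigma)$); using the inductive hypothesis, Lemma~\ref{joinfact}, the contractibility of $L_{\cC_{\dr}}(\sigma)$ when $\sigma$ is a vertex and $|B_+| = 2$, and the fact that $\cC_{\dr}$ of a cobordism with at most one positive boundary component is empty (so that the cells of $Y$ merging $B_+$ too far contribute nothing to the $E^1$-page), one shows the downward spectral sequence converges to $\Z$ in total degree $0$ and to $0$ in total degrees $p+q$ with $0 < p+q \le g + |B_+| - 3$. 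It then remains to show that the leftward spectral sequence converges to $H_*(\cC_{\dr}(\Sigma))$ in total degrees less than $\dim(\cC_{\dr}(\Sigma))$. Following the proof of Lemma~\ref{downconvlemma}, I would filter each column of the $E^1$-page by a weight $W(\delta) = g(\Sigma_\delta) + |B_+^{\delta}| - 3$ recording the draining piece $\Sigma_\delta$ cut off by a vertex $\delta$, and establish the analogue of Lemma~\ref{splspecdeclink}: whenever $H_p\big(L_{\cA_{\nosep}}(\delta)\big)\neq 0$, Lemma~\ref{noseparcconnlemma} forces $W(\delta) < p$, and combined with $g(\Sigma_\delta) + g(\Sigma\setminus\Sigma_\delta) = g$ and the inductive bound this yields that the complex of draining cycles of $\Sigma\setminus\Sigma_\delta$ is at least $(q-1)$-acyclic, which is exactly what drives the descending-link computation on the column.

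I expect the main obstacle to be precisely this last step --- getting the numerical bookkeeping in the analogue of Lemma~\ref{splspecdeclink} to close with the sharp constant $g - 3 + |B_+|$ (there is no slack here, in contrast with the $\cC_{\spl}$ case), now complicated by the fact that $B_-$ may be split between the two sides of $\delta$, that each side may itself be draining, balanced, or flooding, and that ``cutting along $\delta$'' must be read in the locally linear complex when $\delta$ is a multicurve. The remaining verifications --- that $L_{\cA_{\nosep}}$ of a cut surface is the corresponding link inside $\cA_{\nosep}(\Sigma,Q)$ and similarly for $\cC_{\dr}$, and that all the surgeries and cuttings preserve membership in $\parcob$ and the nonnegative-combination condition built into the definition of $\cC_{\dr}(\Sigma)$ --- are expected to be routine.
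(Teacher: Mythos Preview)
Your overall plan --- induct on $\parcob$, dispose of $B_0$ via Proposition~\ref{BBPflow}, and invoke Proposition~\ref{gensplitconnlemma} for the base case $|B_+|=2$ --- agrees with the paper.  However, the paper does \emph{not} rerun the bi-cellular spectral sequence against $\cA_{\nosep}$ for the inductive step.  Instead it forgets the orientation of one component $b\in B_+$ to obtain $\Sigma'$ with $|B_+'|=|B_+|-1$, uses Lemma~\ref{decconlemmaaux} and the inductive hypothesis to get $\cC_{\dr}(\Sigma')$ to be $(g-4+|B_+|)$--acyclic, and then builds $\cC_{\dr}(\Sigma)$ from $\cC_{\dr}(\Sigma')$ in three explicit PL--Morse steps: first joining with the discrete set $K$ of curves enclosing $b$ together with one other $B_+$ component (this is where the acyclicity gains the missing $+1$), then adding the remaining single-curve vertices, then the multicurve vertices.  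Each descending link in the last two steps is identified with a join $\cC_{\spl}(\Sigma_M)*\cC_{\dr}(\calT_M)$ or $\cC_{\dr}(\Sigma_M)*\cC_{\dr}(\calT_M)$ and handled by induction and Lemma~\ref{joinfact}.

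Your spectral-sequence route has a concrete gap already in the downward direction.  The claimed identification $L_{\cC_{\dr}}(\sigma)=\cC_{\dr}(\Sigma\cut\sigma)$ is false: if $\sigma$ is an arc joining $p_1,p_2\in B_+$, then in $\Sigma'=\Sigma\cut\sigma$ these are merged into a single boundary component, so a curve $\gamma$ bounding a pair of pants with $p_1,p_2$ is a draining vertex of $\cC_{\dr}(\Sigma)$ (the cobordism runs from two inputs to one output) disjoint from $\sigma$, hence lies in $L_{\cC_{\dr}}(\sigma)$, yet in $\Sigma'$ the same curve is only balanced and so is not in $\cC_{\dr}(\Sigma')$.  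Any draining cycle whose $\Sigma_M$ contains both $p_1$ and $p_2$ with margin exactly one is lost in the same way.  This is not merely cosmetic: even if the identification held, the numerics would not close.  With one interval per $B_+$ component every arc of $\cA_{\nosep}(\Sigma,Q)$ joins two distinct $B_+$ components, so cutting along a $p$--cell preserves $g$ and drops $|B_+|$ by $p+1$; the inductive bound then yields only $(g-3+|B_+|-p-1)$--acyclicity where you need $(g-3+|B_+|-p)$.  In the proof of Proposition~\ref{gensplitconnlemma} this off-by-one occurs only at $p=0$ (because the target there involves $\mathbbm 1_{|B_+|\ge 2}$ rather than $|B_+|$) and is repaired by a single contractibility observation; here the deficit is uniform in $p$, and bridging it is precisely the content of the paper's join-with-$K$ step.
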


\p{Capping boundary components} Let $S = S_g^b$ be a compact, orientable surface with $b \geq 1$.  Let $p \in \pi_0(S_g^b)$ be a boundary component, and let $S' = S_g^{b-1}$.  Let $\iota:S \rightarrow S'$ be an inclusion such that $\iota(p)$ bounds a disk.  Let $\cC'(S) \subseteq \cC(S')$ denote the full subcomplex generated by curves $c$ such that $\iota(c)$ is essential.  The map $\iota$ induces a pushforward $\Phi(\iota): \cC'(S) \rightarrow \cC(S)$.  Kent--Leininger--Schleimer show that this map $\Phi(\iota)$ is a homotopy equivalence~\cite[Corollary 7.2]{KLS} (their work uses punctures, but replacing boundary components with punctures does not change the isomorphism type of the curve complex).  In fact, their argument implies that $\Phi(\iota)$ is a fibration with fibers homeomorphic to trees (see~\cite[Proposition 7.6]{KLS}).  We extend this to the following.

\begin{lemma}\label{capequivlemma}
Let $S = S_g^b$ be a compact, orientable surface with $b \geq 1$.  Let $p \in \pi_0(S_g^b)$ be a boundary component, and let $S' = S_g^{b-1}$.  Let $\iota:S \rightarrow S'$ be an inclusion such that $\iota(p)$ bounds a disk.  Let $\cC'(S) \subseteq \cC(S')$ denote the full subcomplex generated by curves $c$ such that $\iota(c)$ is essential.  Let $X \subseteq \cC(S')$ be a subspace.  Then $\Phi(\iota)^{-1}(X) \rightarrow X$ is a homotopy equivalence.
\end{lemma}

\begin{proof}
As above, the work of Kent--Leininger--Schleimer says that $\Phi(\iota)$ is a fibration with contractible fibers.  By definition $\Phi(\iota)^{-1}(X)$ is the pullback of this fibration along the map $X \hookrightarrow \cC(S')$.  Therefore $\Phi(\iota)^{-1}(X)$ is also a homotopy equivalence.
\end{proof}

\bn This allows us to prove the following.

\begin{lemma}\label{decconlemmaaux}
Let $\Sigma = (S_g^b, \fB_+, \fB_-, \fB_0)$ be a partial cobordism with $|\fB_+|, |\fB_-| \geq 2$, $|\fB_0| \geq 1$.  Let $b \in \fB_0$ be a boundary component and let $\Sigma'$ be the partial cobordism given by gluing a disc along $p$.  The map $\cC_{\dr}(\Sigma) \rightarrow \cC_{\dr}(\Sigma')$ is a homotopy equivalence.
\end{lemma}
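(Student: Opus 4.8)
The plan is to prove Lemma~\ref{decconlemmaaux} by a Brendle--Broaddus--Putman/Hatcher flow argument, adapting the proof of Proposition~\ref{BBPflow} to the locally linear complex $\cC_{\dr}(\Sigma)$; since $\cC_{\dr}(\Sigma)$ is not a subcomplex of a curve complex, Proposition~\ref{BBPflow} does not apply verbatim, but the same strategy does. Capping $p$ with a disk kills exactly the curves on $S$ that are isotopic to $p$ or that cobound a pair of pants with $p$ and another boundary component of $S$ --- the former become nullhomotopic, the latter peripheral; call such curves \emph{$p$-dead}. Let $\cC'_{\dr}(\Sigma)\subseteq\cC_{\dr}(\Sigma)$ be the subcomplex spanned by the representative multicurves that have no $p$-dead component. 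Capping $p$ restricts to a map $\iota_*\colon\cC'_{\dr}(\Sigma)\to\cC_{\dr}(\Sigma')$: a representative multicurve with no $p$-dead component stays representative after capping, since capping a disk changes neither the genus nor the sets $B_+$, $B_-$ (so the ``draining'' condition persists) and alters homology classes only modulo $[p]$, which is never a nonnegative combination of the classes of curves in $M\cup B_+$. It therefore suffices to prove that both $\cC'_{\dr}(\Sigma)\hookrightarrow\cC_{\dr}(\Sigma)$ and $\iota_*\colon\cC'_{\dr}(\Sigma)\to\cC_{\dr}(\Sigma')$ are homotopy equivalences.

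Both statements follow from a single surgery flow. As $|B_+|\ge 2$, fix a boundary component $p_1\neq p$ and an oriented arc $\alpha$ from $p_1$ to $p$; choose geodesic representatives of all curves, put them in minimal position with $\alpha$, and perturb so that their intersections with $\alpha$ are distinct and linearly ordered along $\alpha$. As in the definition of $\Surg_\delta(\beta)$ in Section~\ref{BBPsection} and of $(\Sigma,Q)\cut\alpha$ in Section~\ref{nosepsection}, define the surgery of a representative multicurve $M$ along $\alpha$ at its intersection point closest to $p$, and weight $M$ by the distance along $\alpha$ from $p_1$ to that point. Exactly as in the proof of Proposition~\ref{BBPflow}, the descending link of a positive-weight cell is a \emph{cone} with cone point the surgered multicurve, so any map of a sphere into $\cC_{\dr}(\Sigma)$ can be homotoped, lowering the maximal weight one step at a time, into the subcomplex $M(W)$ of configurations disjoint from $\alpha$; applying the argument of Lemma~\ref{BBPlemma} to the collection of $p$-dead curves then gives $\cC'_{\dr}(\Sigma)\simeq\cC_{\dr}(\Sigma)$. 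For the second statement, a representative multicurve disjoint from $\alpha$ lies in the complement of a neighborhood of $\alpha$ and hence is determined by, and determines, its image on $S'$; the standard fact that un-marking the capping disk (equivalently, forgetting $\alpha$) is a homotopy equivalence on these complexes --- proved by the contractibility of the fibers of $\iota_*$ over the cells of $\cC_{\dr}(\Sigma')$, which is again a Hatcher flow --- then identifies $M(W)$ up to homotopy with $\cC_{\dr}(\Sigma')$, and chasing the maps gives $\iota_*\colon\cC'_{\dr}(\Sigma)\simeq\cC_{\dr}(\Sigma')$.

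The main obstacle is the verification that the surgery move never leaves $\cC_{\dr}(\Sigma)$ --- that is, that $\cC_{\dr}(\Sigma)$ satisfies the analogue of Brendle--Broaddus--Putman compatibility relative to $p$. Surgering a component of a representative multicurve along $\alpha$ must again yield a representative multicurve: the surgered configuration still separates $S$ into two pieces, one of which is a draining partial cobordism --- this uses crucially that $\alpha$ ends on $p\in B_0$, so the surgery changes neither the genus nor the count of boundary components in $B_+$ versus $B_-$ --- and the homological nondegeneracy of $M\cup B_+$ survives because the surgery changes homology classes only by multiples of $[p]$. Determining which of the two arcs produced by the surgery to keep, and tracking the induced change in the partition data and in the orientations of the newly created boundary curves, is the delicate part; it is the direct analogue of the case analysis in the definition of $(\Sigma,Q)\cut\alpha$ in Section~\ref{nosepsection}. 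Finally, there is a purely formal point: $\cC_{\dr}(\Sigma)$ is a locally linear cell complex, not a simplicial complex, so the flow must be run cell by cell --- the surgery acts on the vertices of a cell $P_M$, and the convex hulls follow --- which is precisely the setting of Section~\ref{plmorseconvexsection}.
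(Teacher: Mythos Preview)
Your approach is essentially the same as the paper's: both argue by a Hatcher/Brendle--Broaddus--Putman surgery flow along an arc joining $p\in B_0$ to a boundary component in $B_+$, reducing to the subcomplex of multicurves disjoint from the arc and identifying that subcomplex with $\cC_{\dr}(\Sigma')$. The paper's proof is much terser---it simply asserts that $\Surg_\delta(M)$ is again a vertex of $\cC_{\dr}(\Sigma)$ and then invokes the argument of Proposition~\ref{BBPflow}---whereas you unpack the argument into the two separate homotopy equivalences $\cC'_{\dr}(\Sigma)\hookrightarrow\cC_{\dr}(\Sigma)$ and $\iota_*\colon\cC'_{\dr}(\Sigma)\to\cC_{\dr}(\Sigma')$, name the $p$-dead curves explicitly, and flag both the compatibility verification and the locally linear issue; these elaborations are helpful but do not constitute a different strategy.
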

\begin{proof}
This follows from Lemma~\ref{capequivlemma}.  
\end{proof}
\bn We now prove Lemma~\ref{decconnlemma}.  The proof will proceed by inducting on the poset of partial cobordisms, denote $\parcob$.  The elements of $\parcob$ are partial cobordisms, and $\Sigma < \Sigma'$ if either $g(\Sigma) < g(\Sigma')$, or $g(\Sigma) = g(\Sigma')$ and $|\fB_+| < |\fB_+'|$
\begin{proof}[Proof of Lemma~\ref{decconnlemma}]
We will induct on the poset of partial cobordisms $\parcob$.  

\p{Base cases} The base cases are given by partial cobordisms with $|\fB_+| = 2$ and $|\fB_0| = 0$.  In this case, $\cC_{\dr}(\Sigma)$ is identified with the complex of splitting curves.  Lemma~\ref{gensplitconnlemma} says that when $|\fB_+| = 2$ and $|\fB_-| \geq 2$, the complex of splitting curves is $(g-1)$--acyclic, so the lemma holds.

\p{Induction on $\parcob$}  Let $\Sigma = (S_g^b, \fB_+, \fB_-, \fB_0) \in \parcob$ with $|\fB_+| \geq 3$, $|\fB_-| \geq 2$ and $|\fB_+| \leq |\fB_-| + 1$ such that the lemma holds for all $\Tau < \Sigma$ satisfying the hypotheses of the lemma.

Lemma~\ref{decconlemmaaux} says that we can fill in boundary components of $\fB_0$ with discs without changing the homotopy type of $\cC_{\dr}(\Sigma)$, so we may assume that $|\fB_0| = 0$.  Let $\Sigma' = (S_g^b, \fB_+', \fB_-, \fB_0)$ be the partial cobordism given by relabeling a boundary component $b \in \fB_+$ to be a boundary component of $\fB_0$ and let $\Sigma'' = (S_g^{b-1}, \fB_+', \fB_-)$ be the partial cobordism given by filling in $b$ with a disc.  By Lemma~\ref{decconlemmaaux}, there is a homotopy equivalence
\begin{displaymath}
\cC_{\dr}(\Sigma') \simeq \cC_{\dr}(\Sigma'').
\end{displaymath}
\bn By the inductive hypothesis, $\cC_{\dr}(\Sigma'')$ is $(g - 4 + |\fB_+|)$--acyclic and hence $\cC_{\dr}(\Sigma')$ is $(g-4 + |\fB_+|)$--acyclic as well.  Hence it suffices to show that $\mathfrak{c}(\cC_{\dr}(\Sigma')) +1 \leq \mathfrak{c}(\cC_{\dr}(\Sigma))$.  We do this in three steps:
\begin{enumerate}
\item We add to $\cC_{\dr}(\Sigma')$ any vertices $M \in \cC_{\dr}(\Sigma)$ that become inessential when $b$ is capped with a disc, and show that this strictly increases connectivity by 1.
\item We add vertices $M \in \cC_{\dr}(\Sigma)$ such that $M$ is a single simple closed curve, and show that this does not decrease connectivity.
\item We add in the remaining vertices and show again that connectivity does not decrease.
\end{enumerate}

\p{Adding curves that become inessential when $b$ is filled in with a disc}  Let $K$ be the set of vertices in $\cC_{\dr}(\Sigma)$ such that at least one curve becomes inessential when $b$ is filled in with a disc.  The set $K$ is a discrete set in the sense that no two vertices of $K$ share an edge.  Furthermore, every element of $K$ is a single curve $\gamma$ such that $\gamma$ surrounds $b$ and one other boundary component in $\fB_+$.  Let $\cC_{\dr}(\Sigma', K)$ denote the full subcomplex of $\cC_{\dr}(\Sigma)$ generated by $\cC_{\dr}(\Sigma')$ and $K$.  An example of a curve $M \in K$ can be found in Figure~\ref{addcurvedecex1}.  
\begin{figure}[ht]
\begin{tikzpicture}
\node[anchor=south west, inner sep = 0] at (0,0){\includegraphics[scale = 0.8]{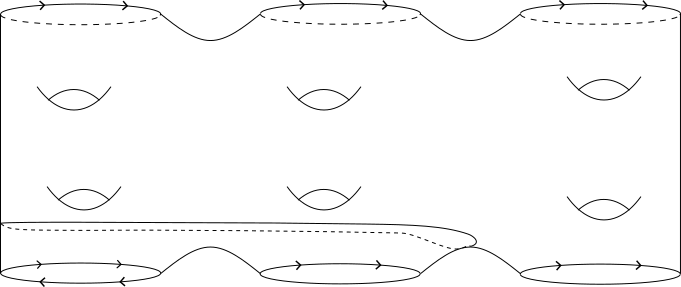}};
\node at (2,-.2){\large $b$};
\node at (4,1.6){\large $M$};
\node at (7.4,-0.5){\large $\fB_+$};
\node at (7.4,6.3){\large $\fB_-$};
\end{tikzpicture}
\caption{$M$ is a vertex in $K$.}
\label{addcurvedecex1}
\end{figure}

Now, observe that the subcomplex of $\cC_{\dr}(\Sigma',K)$ spanned by vertices disjoint from any $M \in K$ is isomorphic to $\cC_{\dr}(\Sigma'')$, which by Lemma~\ref{decconnlemma} is homotopy equivalent to $\cC_{\dr}(\Sigma')$.  Then the set $K$ is discrete and non--empty, so by a result of Brendle--Broaddus--Putman~\cite[Lemma 4.1]{BBP}, there is a homotopy equivalence
\begin{displaymath}
\cC_{\dr}(\Sigma',K) \simeq \cC_{\dr}(\Sigma') *K.
\end{displaymath}
\bn Hence $\mathfrak{c}(\cC_{\dr}(\Sigma', K)) \geq (g- 3 + |\fB_+|)$ by Lemma~\ref{joinfact} and the inductive hypothesis.

\p{Adding in single curves}  Let $C^1_{\dr}(\Sigma', K)$ denote the subcomplex of $\cC_{\dr}(\Sigma)$ consisting of multicurves $M$ such that every vertex $N \subseteq M$ either has $N \in \cC_{\dr}(\Sigma', K)$ or $N$ a single curve.  Let $M$ be a vertex of $C^1_{\dr}(\Sigma', K)$ and let $\Sigma_M$ denote the associated cobordism from the definition of the complex of draining cycles.  Let $\Tau_M$ denote the other acyclic component of $S_g^b \cut M$, which is naturally a cobordism.  An example of such a vertex $M$ can be found in Figure~\ref{addcurvedecex2}.  
\begin{figure}[ht]
\begin{tikzpicture}
\node[anchor=south west, inner sep = 0] at (0,0){\includegraphics[scale = 0.8]{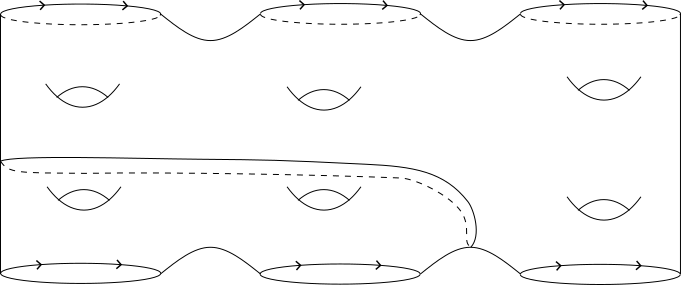}};
\node at (7.4,-0.5){\large $\fB_+$};
\node at (7.4,6.3){\large $\fB_-$};
\node at (2,-0.2){\large $b$};
\node at (4.2, 1.5){\large $\Sigma_M$};
\node at (4.2, 3.0){\large $M$};
\node at (10,4){\large $\Tau_M$};
\end{tikzpicture}
\caption{A curve $M \in C_{\dr}^1(\Sigma',K)$.  The top boundary components are in $\fB_-$, the bottom components are in $\fB_+$.}
\label{addcurvedecex2}
\end{figure}

For $M$ a vertex of $\cC_{\dr}^1(\Sigma', K)$, let
\begin{displaymath}
W(M) = 
\begin{cases}
0 & \text{if } M \in \cC_{\dr}(\Sigma', K)\\
g(\Sigma_M) & \text{otherwise}.
\end{cases}
\end{displaymath}
\bn We will show that $W$ is a $(g - 4 + |\fB_+|)$--acyclic PL--Morse function.  Since $\cC_{\dr}(\Sigma', K)$ is at least $(g- 3 +|\fB_+|)$--acyclic and $W$ is sharp (as in Section~\ref{plmorseconvexsection}), Lemma~\ref{plconvexapp} will imply that $\cC_{\dr}^1(\Sigma', K)$ is $(g - 3 + |\fB_+|)$--acyclic.  

Let $M \in \cC_{\dr}^1(\Sigma', K)$ be a positive weight vertex and let $N \in d_W^{\adj}(M)$ be a vertex.  Observe that $N$ is supported on either $\calT_M$ or $\Sigma_M$.  If $N$ is supported on $\calT_M$, then $N \in \cC_{\dr}(\calT_M)$.  Otherwise, $N \in \cC_{\dr}(\Sigma_M)$, which implies that $N$ is a singleton.  Hence there is a canonical isomorphism
\begin{displaymath}
d_W(M) \cong \cC_{\spl}(\Sigma_M) * \cC_{\dr}(\calT_M).
\end{displaymath}
\bn  By Proposition~\ref{gensplitconnlemma}, $\mathfrak{c}(\cC_{\spl}(\Sigma_M)) = g(\Sigma_M) - 2$.  Then by the inductive hypothesis, we have
\begin{displaymath}
\mathfrak{c}(\cC_{\dr}(\calT_M)) = g(\calT_M) - 3 + |\fB_+| - 1 = g(\calT_M) - 4 + |\fB_+|.
\end{displaymath} 
\bn Hence by Lemma~\ref{joinfact}, we have
\begin{displaymath}
\mathfrak{c}(d_W(M)) = g(\Sigma_M) - 2 + g(\calT_M) - 4 + |\fB_+| + 2 = g(\Sigma) - 4 + |\fB_+|
\end{displaymath}
\bn so this step is complete. 

\p{Adding other vertices}  We will now add in vertices in $\cC_{\dr}(\Sigma) \setminus \cC_{\dr}^1(\Sigma', K)$.  Let $M$ be a vertex of $\cC_{\dr}(\Sigma)$ and let $\Sigma_M$ be the associated partial cobordism.  Let 
\begin{displaymath}
W(M) = 
\begin{cases}
0& \text{if } M \in \cC^1_{\dr}(\Sigma', K)\\
\left|\chi(\Sigma_M)\right| & \text{otherwise.}
\end{cases}
\end{displaymath}
\bn We will show that $W$ is a sharp $(g- 4+|\fB_+|)$--acyclic PL--Morse function.  An application of Lemma~\ref{plconvexapp} will complete the proof.

\p{Claim} The PL--Morse function $W$ is sharp (as in Section \ref{plmorseconvexsection}).

 \p{Proof of claim} Let $P \subseteq \cC_{\dr}(\Sigma)$ be a cell with $\dim(P) \geq 1$.  Assume that there are $M, N \in P^0$ with $W(N) = W(M) \geq 1$.  Suppose by way of contradiction that $\max_{L \in P^0} W(L) = W(N)$.  Let $P' = N \cup M$, which is a sub--cell of $P$.  If $\Sigma_M \subseteq \Sigma_N$ then $W(M) < W(N)$, so assume that $\Sigma_M \not \subseteq \Sigma_N$.  Let $\Tau$ be a acyclic component of $\Sigma_M \cut P'$ which is not contained in $\Sigma_N$.  Therefore $\Tau_+ \subseteq N \cup \fB_+$ and $\Tau_- \subseteq M \cup \fB_-$.  Then $\Tau$ must be balanced.  Indeed, if $\Tau$ is draining, then the vertex $M'$ adjacent to $M$ corresponding to $\Tau$ cannot be a vertex of $\cC_{\dr}(\Sigma)$, since the cobordism $\Sigma_M \cut \Tau$ is not draining.  Likewise, if $\Tau$ is flooding, then the vertex $N'$ adjacent to $N$ could not be a vertex of $\cC_{\dr}(\Sigma)$, since the cobordism $\Sigma_N \cup \Tau$ cannot be draining.  But then $\Tau$ is balanced, which means that $N' = N \setminus (\Tau_+) \cup (\Tau_- \cap M)$ has $W(N'') \geq 1$.  By construction, we have $\Sigma_N \subseteq \Sigma_{N'}$, so $W(N') > W(N)$.  This contradicts our assumption that $W(N)$ was maximal on $P^0$. 

\p{Continuing to add other vertices} To see that $W$ is $(g- 4 + |\fB_+|)$--acyclic, let $M$ be a vertex in $\cC_{\dr}(\Sigma)$ of positive weight.  Observe that the cobordism $\Sigma_M = (S_M, \fB_+^M, \fB_-^M)$ satisfies the following two properties:
\begin{itemize}
\item $|\fB_+^M| = |\fB_-^M| + 1$; and
\item $p \in \fB_+^M$.
\end{itemize}
\bn Indeed, otherwise $M$ would be a vertex of $\cC_{\dr}(\Sigma', K)$.  Let $\Tau_M$ be the connected component of $\Sigma \cut M$ which is not $\Sigma_M$.  An example of such a vertex $M$ can be found in Figure~\ref{addcurvedecex3}.  
\begin{figure}[ht]
\begin{tikzpicture}
\node[anchor=south west, inner sep = 0] at (0,0){\includegraphics[scale = 0.8]{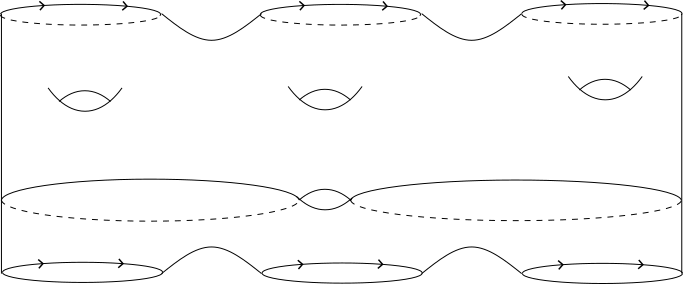}};
\node at (2,-.2){\large $b$};
\node at (6.8, 1){\large $\Sigma_M$};
\node at (6.8, 2.9){\large $\Tau_M$};
\node at (7.4,-0.5){\large $\fB_+$};
\node at (7.4,6.3){\large $\fB_-$};
\end{tikzpicture}
\caption{A vertex $M \in C_{\dr}^1(\Sigma',K)$ with $W(M) \geq 1$.  The top boundary components are in $\fB_-$, the bottom components are in $\fB_+$.}
\label{addcurvedecex3}
\end{figure}

The surface $\Tau_M$ is naturally a cobordism from a union of $M$ and a subset of $\fB_+$ to a subset of $\fB_-$.  We will show that $d_W^{\adj}(M) \cong \cC_{\dr}(\Sigma_M) * \cC_{\dr}(\Tau_M)$.  Let $N \in d_W^{\adj}(M)$ be a vertex.  Let $P = M \cup N$.  By the definition of $d_W^{\adj}$, $N$ and $M$ are connected by an edge, so $N$ is either supported on $\Sigma_M$ or $\Tau_M$.  If $N \subseteq \Sigma_M$, then $N \in \cC_{\dr}(\Sigma_M)$.  Any vertex $P \in \cC_{\dr}(\Sigma_M)$ must have $W(P) < W(M)$, so $d_W^{\adj}(M) \cap \cC_{\dr}(\Sigma_M) = \cC_{\dr}(\Sigma_M)$.  Otherwise, if $N \subseteq \Tau_M$, then $W(N) = 0$.  Therefore the partial cobordism $\Sigma_N$ realizing $N$ as draining is still draining even if $p$ is filled in with a disc.  Then since $\Sigma_N \supseteq \Sigma_M$, we have $N \in \cC_{\dr}(\Tau_M)$.

Hence $d_W(M)$ is at least $(g(\Sigma_M) - 3 + |\fB_+(\Sigma_M)| + g(\Tau_M) - 3 + |M| + |\Tau_M \cap \fB_+| + 2)$--acyclic by the inductive hypothesis and Lemma~\ref{joinfact}.  But then $g(\Tau_M) + g(\Sigma_M) + |M| = g$ and $|\fB_+^M| + |\Tau_M \cap \fB_+| = |\fB_+|$, so $d_W(M)$ is at least $(g - 4 + |\fB_+|)$--acyclic.  Therefore since $\cC_{\dr}^1(\Sigma', K)$ is $(g - 3 + |\fB_+|)$--acyclic, Lemma~\ref{plconvexapp} implies that $\cC_{\dr}(\Sigma)$ is $(g - 3 + |\fB_+|)$--acyclic, which completes the proof.
\end{proof}

\subsection{Completing the proof of Theorem \ref{homolcyclethm}}\label{homolcyclecompletesubsec}

We will prove in Lemma~\ref{linear} that the PL--Morse function $W$ defined in Section~\ref{complexcycledefnsection} is a linear PL--Morse function.  We then prove in Lemma~\ref{dllemmaconstruction} that the descending links of $W$--constant $k$--cells are given by joins of the complex of draining cycles.  We conclude by proving Theorem~\ref{homolcyclethm}, which we recall says that the complex of homologous curves $\cC_{\vec{x}}(S_g)$ is $(g-3)$--acyclic.

\begin{lemma}\label{linear}
The PL--Morse function $W$ is a linear PL--Morse function.
\end{lemma}
\begin{proof}
Let $\sigma \subseteq \cB_{\vec{x}}(S_g)$ be a cell corresponding to a multicurve $M$.  By definition, $\sigma$ is the convex hull of its vertices $v_0,\ldots, v_m$.  Hence every point $v \in \sigma$ is a linear combination
\begin{displaymath}
v = \sum_{i=0}^m t_iv_i
\end{displaymath}
\bn with $\sum_{i=0}^m t_i = 1$.  Set 
\begin{displaymath}
W^{\sigma}(v) = \sum_{i=0}^m t_i W(v_i).
\end{displaymath}
\bn It suffices to show that $W$ is well-defined on points in $\R^{\mathfrak{S}}$, i.e., independent of the choice $\sigma$.  To see this, suppose that 
\begin{displaymath}
v = \sum_{i=0}^m t'_iv_i
\end{displaymath}
\bn is another linear combination with $\sum_{i=0}^m t'_i = 1$.  If $a_0,\ldots,a_n$ are the underlying simple closed curves in the multicurve $M$ corresponding to $\sigma$, then each $v_i$ is by definition a formal sum
\begin{displaymath}
\sum_{j=0}^n \lambda_{i,j} a_j
\end{displaymath}
\bn such that
\begin{displaymath}
\vec{x} = \sum_{j=0}^n \lambda_{i,j} [a_j]
\end{displaymath}
\bn in $\HH_1(S_g;\Z)$.  Since
\begin{displaymath}
\sum_{i=0}^m t_iv_i = \sum_{i=0}^m t'_i v_i
\end{displaymath}
\bn there is a relation for each $a_j$ given by
\begin{displaymath}
\sum_{i=0}^m t_i\lambda_{i,j}= \sum_{i=0}^m t'_i\lambda_{i,j}.
\end{displaymath}
\bn Hence we have
\begin{displaymath}
\sum_{j = 0}^n \sum_{i=0}^m t_i\lambda_{i,j} =\sum_{j = 0}^n \sum_{i=0}^m t_i'\lambda_{i,j}.
\end{displaymath}
\bn But by the definition of $W$, this is precisely
\begin{displaymath}
\sum_{i=0}^m t_iW(v_i) = \sum_{i=0}^m t'_iW(v_i)
\end{displaymath}
\bn so the claim holds.
\end{proof}
\bn We now describe the $W$--constant cells of $\cB_{\vec{x}}(S_g)$.   
\begin{lemma}\label{drainmaxconstlemma}
Let $\sigma$ be a $W$--constant cell of $\cB_{\vec{x}}(S_g)$.  Let $M$ be the oriented multicurve corresponding to $\sigma$ and let $\Sigma_0 \sqcup \ldots \sqcup \Sigma_k$ be the connected components of $S_g \cut M$.  Each cobordism $\Sigma_i$ is balanced.
\end{lemma}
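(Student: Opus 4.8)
The plan is to prove the contrapositive: I will show that if some component $\Sigma_j$ of $S_g\cut M$ fails to be balanced, then $W$ is not constant on $\sigma$. First I would fix notation. Write $M=a_1\sqcup\cdots\sqcup a_n$ and identify a weighting $\mu=(\mu_1,\dots,\mu_n)\in\R^n$ with the point $\sum_i\mu_i a_i\in\R^{\mathfrak S}$. Let $\mathcal A\subseteq\R^n$ be the affine subspace of weightings with $\sum_i\mu_i[a_i]=\vec x$ in $H_1(S_g;\R)$, and let $L=\{\mu:\sum_i\mu_i[a_i]=0\}$ be its space of directions. Recall that $\sigma=P_M$ is the convex hull of the basic cycles contained in $M$, and that these basic cycles are precisely the vertices of the polytope of nonnegative weightings of the curves of $M$ representing $\vec x$ (this is the cell structure of $\cB_{\vec x}(S_g)$ from \cite{BBM}, as corrected in \cite{Gaifullin}); consequently $\operatorname{relint}(\sigma)$ is the set of strictly positive weightings in $\mathcal A$, and therefore $\sigma$ spans $\mathcal A$ affinely. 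By Lemma~\ref{linearW} the function $W$ is the restriction of the linear functional $\sum_i\mu_i a_i\mapsto\sum_i\mu_i$ on $\R^{\mathfrak S}$; since $W$ is affine, it follows that $\sigma$ is $W$-constant if and only if $W$ is constant on $\mathcal A$, i.e.\ if and only if $\sum_i\lambda_i=0$ for every $\lambda\in L$.

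Now suppose $\Sigma_j$ is not balanced. Cutting $S_g$ along $M$ replaces each curve $a_i$ by two boundary circles of $S_g\cut M$, one oriented inward (lying in the $B_+$ of its component) and one oriented outward (lying in $B_-$). Define $\lambda'\in\R^n$ by $\lambda'_i=+1$ if the outward copy of $a_i$ lies on $\partial\Sigma_j$ while the inward copy does not, $\lambda'_i=-1$ if the inward copy lies on $\partial\Sigma_j$ while the outward copy does not, and $\lambda'_i=0$ otherwise. With the orientation conventions of Section~\ref{drainsubsection}, the $1$-chain $\sum_i\lambda'_i a_i$ is, up to an overall sign, the oriented boundary of the compact subsurface $\Sigma_j\subseteq S_g$, hence is null-homologous; thus $\lambda'\in L$. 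On the other hand, cancelling the contributions of the curves both (or neither) of whose copies lie on $\partial\Sigma_j$, one computes $\sum_i\lambda'_i=|B_-(\Sigma_j)|-|B_+(\Sigma_j)|\neq 0$. By the criterion of the previous paragraph, $\sigma$ is not $W$-constant, which is the desired contrapositive, and the lemma follows.

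The step I expect to be the main obstacle is the structural input that $\sigma=P_M$ spans $\mathcal A$ affinely — equivalently, that $P_M$ is the full polytope of nonnegative representing weightings rather than some proper subpolytope — which I would extract carefully from the definition of the complex of minimizing cycles in \cite{BBM} together with Gaifullin's correction \cite{Gaifullin}. The remaining work is the elementary but convention-sensitive bookkeeping that simultaneously identifies $\sum_i\lambda'_i a_i$ with $\pm\partial\Sigma_j$ and evaluates $\sum_i\lambda'_i$ as the imbalance $|B_-(\Sigma_j)|-|B_+(\Sigma_j)|$; here one must be careful that the inward/outward labels on the boundary circles of $S_g\cut M$ are the same conventions used to orient $\partial\Sigma_j$ as the boundary of the subsurface.
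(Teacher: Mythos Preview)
Your proof is correct and takes essentially the same approach as the paper: both argue by contrapositive that an unbalanced component forces $W$ to be non-constant on $\sigma$. The paper compresses this into two sentences, asserting that an unbalanced $\Sigma_i$ is ``a cobordism between vertices $v$ and $w$ of $\sigma$'' with $W(v)\neq W(w)$; your affine--linear formulation via the boundary relation $\lambda'\in L$ makes explicit the mechanism behind that assertion, and your flagged ``main obstacle'' (that $P_M$ affinely spans $\mathcal A$) is exactly the structural input the paper is tacitly using.
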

\begin{proof}
Suppose otherwise.  Then some cobordism $\Sigma_i$ is not balanced.  This cobordism $\Sigma_i$ is a cobordism between vertices $v$ and $w$ of $\sigma$.  If $\Sigma_i$ is unbalanced, then $W(v) \neq W(w)$ which contradicts the assumption that $\sigma$ is $W$--constant.
\end{proof}
\bn We now explicitly compute $d_W(M)$ in the case that $M$ is a $W$--constant $k$--cell.

\begin{lemma}\label{dllemmaconstruction}
Let $M \subseteq \cB_{\vec{x}}(S_g)$ be a $W$--constant $k$--cell. Let $\Sigma_0,\ldots, \Sigma_k$ be the connected components of $S_g \cut M$.  Then the natural inclusion
\begin{displaymath}
C_{\dr}(\Sigma_0) * \ldots * C_{\dr}(\Sigma_k) \rightarrow d_W^{\adj}(M)
\end{displaymath}
\bn is an isomorphism.
\end{lemma}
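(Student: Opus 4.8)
The plan is to write down the map of the statement explicitly, show it carries the cells of the join bijectively onto the pieces $\Hull(V(\rho))$, $\rho\in\calR_M$, that comprise $d_W^{\adj}(M)$, and check that on each such cell it restricts to an affine isomorphism; since both sides are finite dimensional locally linear complexes and the identification respects faces, this gives the lemma. By Lemma~\ref{drainmaxconstlemma} each component $\Sigma_i$ of $S_g\cut M$ is a balanced partial cobordism, so each $\cC_{\dr}(\Sigma_i)$ is defined. A vertex of $\cC_{\dr}(\Sigma_0)*\cdots*\cC_{\dr}(\Sigma_k)$ is a vertex $N$ of a single $\cC_{\dr}(\Sigma_i)$, namely an oriented multicurve in $\Sigma_i$ cutting off a draining sub-cobordism; seen in $S_g$ it is disjoint from the multicurve $\mu$ underlying $M$. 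First I would verify that $\mu\cup N$ is a cycle in the sense of Section~\ref{complexcycledefnsection}: condition (1) by splicing a basic cycle of $\mu$ with the draining data of $N$ to obtain a basic cycle through each curve of $N$, and condition (2) from the nonnegativity clause built into the definition of $\cC_{\dr}$ together with the same property inherited from $M$ being a cell. Thus $\mu\cup N$ spans a cell $P_{\mu\cup N}$ of $\cB_{\vec{x}}(S_g)$ having $P_M$ as a face, and each basic cycle of $\mu\cup N$ that genuinely involves a curve of $N$ is adjacent to a vertex of $M$ and has strictly smaller $W$-weight --- the strict drop is exactly the point at which the hypothesis ``draining'' ($|B_+|>|B_-|$ on the cut-off piece) is used, since re-expressing a basic cycle of $\mu$ through a draining piece lowers the total coefficient sum. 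Sending $N$ to such a basic cycle (obtained by draining a fixed reference vertex of $M$ along $N$) and extending over joins by the affine structure produces the map.

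For surjectivity onto cells, take $\rho=P_{M'}\in\calR_M$; since $M$ is a face of $\rho$ we have $\mu\subseteq M'$, and $N:=M'\setminus\mu$ is disjoint from $\mu$, so each of its components lies in a unique $\Sigma_i$ and we may write $N=N_0\sqcup\cdots\sqcup N_k$. The crux is to show that each $N_i$ is a representative multicurve for $\cC_{\dr}(\Sigma_i)$ and that $V(\rho)$ is the union over $i$ of the vertex sets of the subcells $P_{N_i}\subseteq\cC_{\dr}(\Sigma_i)$, with $\Hull(V(\rho))$ equal to the corresponding join-cell. Here the adjacency clause in the definition of $d_W^{\adj}$ does the work: if a vertex $B$ of $\rho$ is adjacent to a vertex $v$ of $M$, then $\dim P_{B\cup v}=1$ forces $B$ to differ from $v$ only inside a single $\Sigma_i$, and the assertion ``$B$ arises from $v$ by a weight-decreasing modification that keeps $M'$ a cycle'' translates into ``the sub-multicurve of $N_i$ involved cuts off a draining cobordism'', i.e.\ is a vertex of $\cC_{\dr}(\Sigma_i)$ supported in $N_i$. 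Running this argument backwards shows every such $N_i$ is representative and every vertex of $P_{N_i}$ is realized.

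It then remains to check linearity. On the cell $P_{N_0}*\cdots*P_{N_k}$ the map sends the vertex coming from a vertex $B'$ of $P_{N_i}$ to the basic cycle of $M'$ obtained by draining a fixed reference vertex of $M$ along $B'$, and this assignment is affine in the barycentric coordinates because $W$ is linear (Lemma~\ref{linearW}) and the draining operation is linear in the coefficients; hence it extends to an affine isomorphism $P_{N_0}*\cdots*P_{N_k}\xrightarrow{\ \sim\ }\Hull(V(\rho))$. Since these cells exhaust both complexes compatibly with faces, the map is an isomorphism of locally linear complexes.

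The main obstacle is the middle step: proving, localized to a single balanced component $\Sigma_i$, the equivalence between ``$\mu\cup N_i$ lies below $M$ in the PL--Morse order'' and ``$N_i$ cuts off a draining sub-cobordism of $\Sigma_i$'', together with the claim that the convex hull of the resulting family of basic cycles is the asserted join. Both rest on the interplay between the linear-algebraic meaning of a basic cycle (a $\Z_{>0}$-combination of homology classes equal to $\vec{x}$) and the topological condition $|B_+|>|B_-|$, and the careful bookkeeping of which boundary curves of $\Sigma_i$ are oriented into $B_+$ versus $B_-$ is the fiddly part; the adjacency clause in $d_W^{\adj}$ is what makes the reduction to one component at a time possible.
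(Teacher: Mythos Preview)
Your plan is essentially the paper's approach, carried out more systematically. Both hinge on the same core computation: a basic cycle adjacent to a vertex of $M$ with strictly smaller $W$--weight corresponds precisely to a multicurve in a single component $\Sigma_i$ that cuts off a draining sub-cobordism. The paper proves only this direction explicitly, and only at the vertex level: it takes $N\in d_W^{\adj}(M)$, asserts $N\setminus\mu$ lands in one $\Sigma_i$, and then writes down the explicit weight formula $W(N)=W(P)-\min_{p\in P'}\lambda_p\,(|P'|-|N'|)$ to force $|N'|<|P'|$, i.e.\ draining. You cover both directions and address the higher cells and the affine structure, which the paper leaves implicit. The tradeoff is that you gesture at the weight computation (``re-expressing a basic cycle of $\mu$ through a draining piece lowers the total coefficient sum'') rather than writing it out; that formula is really the heart of the lemma, so you should include it.

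One point to be careful with: your construction sends a vertex $N'\in\cC_{\dr}(\Sigma_i)$ to a basic cycle obtained by draining \emph{a fixed reference vertex} of $M$. Different reference vertices can produce different basic cycles in $S_g$, so this is not the ``natural inclusion'' of the statement. The canonical map goes the other way --- $N\mapsto(\text{underlying multicurve of }N)\setminus\mu$ --- and what you actually need is that this is a bijection on vertices (your surjectivity paragraph handles half of this) and that it extends to a cellular isomorphism. Your argument that adjacency forces $N\setminus\mu$ into a single $\Sigma_i$ via $\dim P_{N\cup v}=1$ is the right mechanism, and the paper uses it too (without justification). The linearity step is then straightforward once the vertex bijection is in place, since both sides are locally linear and the cells are convex hulls of matching vertex sets.
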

\begin{proof}
Let $N$ be a vertex in $d_W^{\adj}(M)$.  Let $\Sigma$ be a connected component of $S_g \cut M$, which is naturally a balanced cobordism by Lemma~\ref{drainmaxconstlemma}, such that $\Sigma \cap N \not \subseteq  \partial \Sigma$.   Let $\Tau \subseteq \Sigma$ be the cobordism between a subset of $\fB_+(\Sigma)$ and the union of the multicurve $N' = N \cap \Sigma$ and a subset of $\fB_-(\Sigma)$.  We will show that the cobordism $\Tau$ must be draining.  Let $P \subseteq M$ be a vertex containing $\fB_+(\Sigma)$, and let $P'$ be the unique vertex adjacent of $\cB_{\vec{x}}(S_g)$ that is both adjacent to $P$ and  contains $N'$.  Now, let
\begin{displaymath}
\sum_{p \in P} \lambda_p[p] = \vec{x}
\end{displaymath}
\bn be the linear relation corresponding to $P$.  By construction $\Tau$ realizes a relation in $\HH_1(S_g;\Z)$ of the form
\begin{displaymath}
\sum_{p \in \fB_+(\Sigma)} [p] = \sum_{n \in N'} [n]
\end{displaymath}
\bn where we identify $\fB_+(\Sigma)$ with its natural inclusion into $S_g$.  Then there is a linear relation in $\HH_1(S_g;\Z)$ supported on $\Tau$ given by
\begin{displaymath}
\sum_{p \in P} \lambda_p[P] - \min_{p \in \fB_+(\Sigma)}\lambda_p \left(\sum_{p \in \fB_+(\Sigma)}[p]\right) + \min_{p \in \fB_+(\Sigma)} \lambda_p \left(\sum_{n \in N'} [n] \right) = \vec{x}
\end{displaymath}
\bn Then this is another non--negative linear combination of curves in $P \cup P'$ which are equal in homology to $\vec{x}$.  By definition, this means the support of the above linear combination is contained in $P'$.  Since $N \in d_W^{\adj}(M)$, we must have $W(P') < W(P)$.  But this is only possible if $|\fB_+(\Sigma)| > |N'|$, so $\Tau$ is draining, as desired.
\end{proof}
\bn We now conclude Section~\ref{homolcurvesection} by proving that $\cC_{\vec{x}}(S_g)$ is $(g-3)$--acyclic.
\begin{proof}[Proof of Theorem~\ref{homolcyclethm}]  Let $M$ be a multicurve that represents a $W$--constant $k$-cell $\sigma$.  Label the connected components of $S \cut M$ by $\Sigma_0 \sqcup \ldots \sqcup \Sigma_k$.  By Lemma~\ref{dllemmaconstruction}, we have \begin{displaymath}
d_W^{\adj}(M) = \cC_{\dr}(\Sigma_0) * \ldots *\cC_{\dr}(\Sigma_k),
\end{displaymath}
\bn where $\Sigma_0,\ldots, \Sigma_k$ are the connected components of $S_g \cut M$.  By Lemma \ref{decconnlemma} and Lemma \ref{joinfact} we have that $d_W(\sigma)$ has connectivity at least $$-2+ \sum_{i = 0}^k g(\Sigma_i) + \left|\fB_+(\Sigma_i)\right| - 3 + 2.$$ Rearranging says that this sum equals $$-k - 3 + \sum_{i=0}^k g(\Sigma_i) + \left|\fB_+(\Sigma_i)\right|.$$  This latter summand is equal to $g$, and hence $d_W(\sigma)$ is at least $(g-3-k)$--acyclic.  Now, $W$ is a linear PL--Morse function by Lemma~\ref{linear}.  Hence $W$ is a $(g-3)$--acyclic linear PL-Morse function, so Theorem~\ref{homolcyclethm} follows by Lemma~\ref{plconvexapp} and Bestvina, Bux and Margalit's theorem that $\cB_{\vec{x}}(S_g)$ is contractible~\cite[Theorem E]{BBM}.  
\end{proof}

\bibliographystyle{alpha}
\bibliography{../../../../../Bibliography/mainbib}

\end{document}